\newtheorem{theorem}[subsection]{Theorem}
\newtheorem{lemma}[subsection]{Lemma}
\newtheorem{prop}[subsection]{Proposition}
\newtheorem{corollary}[subsection]{Corollary}
\theoremstyle{definition}
\newtheorem{remark}[subsection]{Remark}
\newtheorem{definition}[subsection]{Definition}
\newtheorem{example}[subsection]{Example}
\newtheorem*{ack}{Acknowledgment}
\numberwithin{equation}{section}
\newcommand{\A}{{\mathcal A}}
\newcommand{\B}{{\mathcal B}}
\newcommand{\RR}{{\mathcal R}}
\newcommand{\Z}{\mathbb{Z}}
\renewcommand{\k}{\Bbbk}
\DeclareMathOperator{\Ker}{Ker}
\DeclareMathOperator{\rk}{rk}
\DeclareMathOperator{\gr}{gr}
\DeclareMathOperator{\im}{im}
\DeclareMathOperator{\coker}{coker}
\DeclareMathOperator{\id}{id}
\begin{document}
%\date{Jan 30, 2008}

\title[Cohomology rings and formality properties of nilpotent groups]{%
Cohomology rings and formality properties of nilpotent groups}

\author[A.~D.~Macinic]{Anca Daniela M\u acinic$^*$}
\address{Inst. of Math. Simion Stoilow,
P.O. Box 1-764,
RO-014700 Bucharest, Romania}
\email{Anca.Macinic@imar.ro}

\thanks{$^*$Partially supported by the CEEX Programme of
the Romanian Ministry of Education and Research, contract
2-CEx 06-11-20/2006.}

\subjclass[2000]{Primary
20F18, 
55P62; 
Secondary
20J05,
55N25.  
}

\keywords{formality, nilpotent group, cohomology ring, resonance varieties, 
minimal model, K\" ahler manifold.}

\begin{abstract}

We introduce partial formality and relate resonance with
partial
formality properties.
For instance, we show that for finitely generated nilpotent groups
that are
k-formal, the resonance varieties are trivial up to degree k.
We also show that the cohomology ring of a nilpotent k-formal group is
generated in degree 1, up to degree k+1; this criterion is necessary and
sufficient for 2-step nilpotent groups to be k-formal.
We compute resonance varieties for Heisenberg-type groups and deduce the degree 
of partial formality for this class of groups. 
 
\end{abstract}

\maketitle

\tableofcontents

\section{Introduction and statement of results}
\label{sect:intro}

A space which has the minimal model isomorphic to the minimal model of its cohomology ring 
is called formal. In other words, the "rational homotopy type" of the space is a formal consequence 
of its cohomology ring. Compact K\"ahler manifolds (in particular, smooth complex projective varieties) 
are important examples of formal spaces. See Deligne-Griffiths-Morgan-Sullivan \cite{DGMS} for details.

It seems natural to introduce a more relaxed version, namely $k$-\textit{formality}
(see Definition \ref{s-formality}), following \cite{DGMS}. For $k=1$, it coincides with 
the usual notion of $1$-formality present in the literature (see \cite{ABC}). 
See \cite{ABC} for the equivalence between $1$-formality of a space and quadratic presentability 
of the Malcev Lie algebra associated to the fundamental group of that space, and also \cite{CT}.

Our notion of partial formality is strictly weaker, despite the terminology, than the one introduced 
by Fern\'{a}ndez-Mu\~{n}oz in \cite[ Definition 2.2]{FM}. For instance, in the case of nilmanifolds, 
formality is equivalent to $1$-formality in the sense of \cite{FM} (see \cite[Lemma 2.6]{FM}), 
which is not the case with our definition (see Corollary \ref{c1}). This is due to the fact that 
the test of partial formality in \cite{FM} is global, in the sense that it involves the whole minimal model,
whereas our $k$-formality test is a finite one, using only information provided by the $k$-minimal model, 
up to degree $k+1$; see Proposition \ref{art}(1), and the discussion following it.

It is well-known that $1$-formality (in our sense) is the first general obstruction in 
the Serre problem regarding the characterization of \textit{projective groups} 
(fundamental groups of smooth projective complex varieties).
A difficult particular case of this problem turns out to be the one of nilpotent groups. 
A positive answer is given by Campana \cite{C} for a certain class of $2$-step nilpotent groups, 
the Heisenberg groups $\mathcal{H}_{n \geq 4}$ (see Definition \ref{heisenberg}).
As for the remaining Heisenberg groups, $\mathcal{H}_{1}$ does not pass the $1$-formality criterion 
(see Corollary \ref{c1} for a slightly more general result);  $\mathcal{H}_{2}$ and $\mathcal{H}_{3}$ 
are also non-projective groups  (see Carlson-Toledo \cite[Corollary 4.5]{CT}). 
When passing to $3$-step nilpotent groups, 
the answer is not known, according to \cite{CT}.

 We approach in Corollary \ref{T} the solution to the Serre problem given by Campana, 
 to point out some  homotopic features of the projective smooth 
 complex varieties constructed in \cite{C}.
 This is actually a consequence of a more general result (where partial formality for 
 a group $G$ is defined via the classifying space $K(G,1)$).
 
\begin{theorem}
\label{Thm}
Let $M$ be a $k$-formal space such that $\pi_1(M)$ is not $k$-formal $(k\ge 2)$. Then 
there exists $2 \leq i \leq k$ such that $\pi_i(M) \neq 0$.
\end{theorem}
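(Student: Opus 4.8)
The plan is to prove the contrapositive. Suppose $M$ is $k$-formal and $\pi_i(M)=0$ for all $2\le i\le k$. I want to conclude that $\pi_1(M)$ is $k$-formal. The idea is that under the homotopy vanishing hypothesis, $M$ is a "$k$-equivalent" to its classifying space $K(\pi_1(M),1)$, and $k$-formality is detected only through degree $k+1$ of the $k$-minimal model (by the finiteness remark following Proposition \ref{art}(1)). So the key is that the vanishing of $\pi_i$ up to degree $k$ forces the $k$-minimal model of $M$ to coincide with that of $K(\pi_1(M),1)$.

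Let me think about the rational homotopy side. Let $\G\to\G_M$ denote a minimal model of $M$, built degree by degree. The generators of the minimal model in degree $i$ correspond, dually, to $\pi_i(M)\otimes\Q$ (for nilpotent/simply connected pieces this is the standard Sullivan dictionary). First I would set up the comparison map between the minimal model of $M$ and the minimal model of $K(\pi_1(M),1)$ induced by the canonical classifying map $c\colon M\to K(\pi_1(M),1)$ that kills the higher homotopy. Since $\pi_i(M)=0$ for $2\le i\le k$, there are no new minimal-model generators of $M$ in degrees $2,\dots,k$ beyond those forced by $\pi_1$; hence the $k$-minimal models (the sub-DGAs generated up to degree $k$, truncated at degree $k+1$ for the formality test) of $M$ and of $K(\pi_1(M),1)$ agree. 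Therefore the $k$-formality of $M$ transfers to $k$-formality of $\pi_1(M)$.

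The main obstacle I anticipate is the bookkeeping with nilpotency and the precise definition of the $k$-minimal model in this paper. Definition \ref{s-formality} and Proposition \ref{art}(1) fix exactly which data the $k$-formality test sees, and I must check that the classifying map $c$ induces an isomorphism on that data. The subtle point is that $M$ need not be nilpotent as a space, so the Sullivan correspondence between degree-$i$ indecomposables and $\pi_i\otimes\Q$ must be applied carefully; I would restrict attention to the relevant low-degree truncation where the construction of the $k$-minimal model only depends on cohomology through degree $k+1$ and on the homotopy groups $\pi_i$ through degree $k$, both of which are governed by $\pi_1(M)$ under the stated hypothesis.

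Concretely, the argument runs: assume $\pi_i(M)=0$ for $2\le i\le k$; build the $k$-minimal model $\mathcal{M}_k(M)$ stagewise and observe that at each stage $2\le i\le k$ no indecomposable generators are added because those would be dual to $\pi_i(M)\otimes\Q=0$; compare with $\mathcal{M}_k(K(\pi_1(M),1))$ via $c$ and conclude the two $k$-minimal models are isomorphic as filtered DGAs through degree $k+1$; finally apply the finiteness of the formality test from Proposition \ref{art}(1) to deduce that $k$-formality of $\mathcal{M}_k(M)$ is equivalent to $k$-formality of $\mathcal{M}_k(K(\pi_1(M),1))$, i.e.\ of $\pi_1(M)$. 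This contradicts the assumption that $\pi_1(M)$ is not $k$-formal, completing the contrapositive.
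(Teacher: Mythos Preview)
Your overall strategy---contrapositive plus the classifying map $c\colon M\to K(\pi_1(M),1)$---matches the paper's. The difference, and the gap in your argument, is in how you justify that the $k$-minimal models of $M$ and of $K(\pi_1(M),1)$ agree.

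You argue that ``no indecomposable generators are added in degrees $2,\dots,k$ because those would be dual to $\pi_i(M)\otimes\Q=0$.'' This invokes the Sullivan dictionary between indecomposables of the minimal model and rational homotopy groups, which is only valid for \emph{nilpotent} (or simply connected) spaces. You flag this yourself as the ``main obstacle,'' but the workaround you offer---restricting to a truncation where ``the construction of the $k$-minimal model only depends on cohomology through degree $k+1$''---is the right idea stated without proof, and is not the same as the homotopy-group argument you actually give.

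The paper bypasses the nilpotency issue entirely. Since $\pi_i(M)=0$ for $2\le i\le k$, the classifying map $c$ is a $(k+1)$-homotopy equivalence; by the relative Hurewicz theorem this makes $c$ a \emph{homology} $(k+1)$-equivalence (isomorphism on $H^{\le k}$, injection on $H^{k+1}$). Proposition~\ref{L} then gives $\mathcal{M}_k(M)\cong\mathcal{M}_k(K(\pi_1(M),1))$ and $\mathcal{M}_k(H^*(M),0)\cong\mathcal{M}_k(H^*(K(\pi_1(M),1)),0)$ directly from the defining universal property of the $k$-minimal model---no appeal to rational homotopy groups, no nilpotency hypothesis. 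Corollary~\ref{form} finishes the argument. So the missing step in your write-up is precisely the Hurewicz/cohomology passage that makes Proposition~\ref{L} applicable.
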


Note that the above result no longer holds if partial formality is taken in the sense of \cite{FM}; 
see Remark \ref{remFM}.

Some of the results presented in Section \ref{obstr}, such as passing from partial to 
full formality (Proposition \ref{k+2}) and Proposition \ref{art}\eqref{ii} are inspired by 
the similar results obtained in the $1$-connected case by Papadima in \cite{P2}.

For nilpotent groups we find obstructions to (partial) formality involving either 
generators of the cohomology ring (up to a degree) or certain \textit{resonance varieties} 
(see Definition \ref{var-res}) associated to the cohomology ring.

\begin{theorem}
\label{B}
Let $G$ be a finitely generated nilpotent group.
\begin{enumerate}
\item \label{motz} 
If $G$ is k-formal , then $H^{\leq k+1}(G)$ is generated as an algebra by $H^1(G)$.
\item \label{converse}
For 2-step nilpotent groups, the converse also holds.
\end{enumerate}
\end{theorem}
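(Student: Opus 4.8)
The plan is to reduce everything to the minimal model of $K(G,1)$, which for a finitely generated nilpotent group is the Chevalley--Eilenberg complex $\mathcal{M}=(\Lambda V,d)$ of the Malcev Lie algebra $\mathfrak{g}$; in particular $V=V^1=\mathfrak{g}^*$ is concentrated in degree one and $H^*(\mathcal{M})=H^*(G)$ as graded algebras. Two structural observations will drive both parts. First, any DGA morphism $\psi\colon\mathcal{M}\to(H^*(G),0)$ must send $V$ into $H^1(G)$ for degree reasons, so $\psi(\Lambda^iV)$ lands in the subspace $A^i\subseteq H^i(G)$ of products of degree-one classes (with $A^1=H^1(G)$); thus $\im\psi^*\subseteq A^i$, and the conclusion that $H^{\le k+1}(G)$ is generated by $H^1(G)$ is precisely $A^i=H^i(G)$ for $i\le k+1$. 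Second, since $\mathfrak{g}$ is finite dimensional each $H^i(G)=H^i(\mathcal{M})$ is finite dimensional of the \emph{same} dimension on both sides, so a degreewise map $\psi^*\colon H^i(\mathcal{M})\to H^i(G)$ is injective, surjective, or bijective all at once.

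For part (1) I would feed in the $k$-formality criterion (Definition~\ref{s-formality}, cf.\ Proposition~\ref{art}(1)): it supplies a DGA morphism $\psi\colon\mathcal{M}\to(H^*(G),0)$ whose induced $\psi^*$ is an isomorphism in degrees $\le k$ and a monomorphism in degree $k+1$. In degrees $i\le k$, surjectivity of $\psi^*$ together with $\im\psi^*\subseteq A^i$ forces $H^i(G)=A^i$. In degree $k+1$ only injectivity is given, but by the equal-dimension principle above injectivity upgrades to bijectivity, hence to surjectivity, yielding $H^{k+1}(G)=A^{k+1}$ as well. This is exactly generation up to degree $k+1$; notably this last upgrade is what pushes the conclusion one degree past the range in which $\psi^*$ is assumed to be an isomorphism.

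For the converse in part (2) I would use that a $2$-step nilpotent Lie algebra is canonically graded, $\mathfrak{g}=\mathfrak{g}/[\mathfrak{g},\mathfrak{g}]\oplus[\mathfrak{g},\mathfrak{g}]$ with $[\mathfrak{g},\mathfrak{g}]$ central; dually $V=W\oplus U$ with $W=\Ker(d|_V)$ the closed generators and $d$ mapping $U$ injectively into $\Lambda^2W$. The plan is to build one explicit morphism $\phi\colon\mathcal{M}\to(H^*(G),0)$ detecting decomposables, namely $\phi|_W$ the identification $W=H^1(G)$ and $\phi|_U=0$. Because $\phi$ then factors as $\Lambda V\surj\Lambda W\to H^*(G)$ with the cup-product map on the right, one reads off $\im(\phi^*\colon H^i(\mathcal{M})\to H^i(G))=A^i$ on the nose. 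Hence the hypothesis $A^i=H^i(G)$ for $i\le k+1$ makes $\phi^*$ surjective, therefore bijective, in every degree $\le k+1$; in particular it is an isomorphism through degree $k$ and a monomorphism in degree $k+1$, which is $k$-formality.

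The main obstacle is verifying that this $\phi$ is a chain map, and this is the only place the $2$-step hypothesis enters. For $u\in U$ the element $du$ is a product of \emph{closed} generators lying in $\Lambda^2W\cap\im d$, so $\phi(du)=[du]=0$ in $H^2(G)$ and $\phi\circ d=0$. For $c$-step algebras with $c\ge3$ the Lie algebra need not equal its associated graded: the differential of a generator of filtration $r$ acquires lower-weight terms that can contribute a \emph{non}-exact class to $\Lambda^2W$, so $\phi|_U=0$ no longer defines a chain map and the converse genuinely fails. By contrast direction (1) is soft, using only that the minimal model is generated in degree one together with the injectivity-implies-bijectivity principle, the whole subtlety there being packaged into that principle promoting generation from degree $k$ to degree $k+1$.
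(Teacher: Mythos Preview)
Your proof is correct. Part~\eqref{converse} is essentially identical to the paper's: both build the explicit map $\phi$ sending closed degree-one generators to their cohomology classes and the non-closed ones to zero, verify it is a DGA map using that $dU\subset\Lambda^2W$ (so $\phi(du)=[du]=0$), and then deduce from the hypothesis that $H^*(\phi)$ is surjective, hence bijective, up to degree $k+1$.

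For Part~\eqref{motz} the paper routes through Proposition~\ref{art}\eqref{iii}, which rests on the $C\oplus N$ decomposition from Proposition~\ref{art}\eqref{i} and the direct surjectivity argument of Proposition~\ref{art}\eqref{ii}; that proof works for an arbitrary $k$-formal rational $K(\pi,1)$ and does not need finite-dimensional cohomology. Your argument is more elementary and bypasses the decomposition entirely: you take the DGA map $\psi\colon\mathcal{M}\to(H^*(G),0)$ straight from the definition of $k$-formality (using $\mathcal{M}_k(G)=\mathcal{M}(G)$ since the model is generated in degree one), note that its image lies in the subalgebra generated by $H^1$, and in degree $k+1$ use finite generation of $G$ to upgrade the given monomorphism on $H^{k+1}$ to an isomorphism by a dimension count. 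This shortcut is legitimate and specific to the finitely generated nilpotent setting; the paper's route buys the extra generality of Proposition~\ref{art} (used elsewhere in the paper) at the cost of the decomposition machinery.
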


For $k=1$, the first part of Theorem \ref{B} follows from \cite{ABC}, Lemma 3.17 on page 35.
The second part of Theorem \ref{B}, for the case $k=1$, follows from \cite{CT}, Corollary $0.2.1$.
As explained in Remark \ref{rem=noteq}, the second part may fail, if $G$ is not
2-step nilpotent, even for $k=1$. 

Another obstruction to partial formality can be expressed in terms of the resonance varieties
$\RR_1^i (G)\subseteq H^1(G, \k)$, over a field $\k$ of characteristic zero.

\begin{theorem}
\label{zerores}
Let $G$ be a nilpotent, finitely generated, $s$-formal group. 
Then the resonance varieties of $G$ are trivial up to degree $s$, that is,
$\RR_1^i (G)\subseteq \{ 0\}$ for $i\le s$.
\end{theorem}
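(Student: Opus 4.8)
The plan is to read off the resonance varieties from the cohomology of rank one local systems on $K(G,1)$, using $s$-formality to pass between the two and nilpotency to force the local-system cohomology to vanish. Since the conditions cutting out $\RR_1^i(G)$ are determinantal and defined over $\k$, triviality may be checked after extending scalars, so I may assume $\k=\C$. Recall that $a\in\RR_1^i(G)$ exactly when the Aomoto complex $(H^*(G,\k),\,{\cup}\,a)$ has nonzero cohomology in degree $i$ (Definition \ref{var-res}); since this locus is a homogeneous cone, it suffices to rule out any $a\neq 0$ for $i\le s$, and I am free to replace $a$ by an arbitrary nonzero multiple $ta$. For generic $t$ the associated character $g\mapsto\exp\bigl(t\,a(g)\bigr)$ of $G$ is nontrivial, because $a\in H^1(G,\k)=\Hom(G,\k)$ is a nonzero homomorphism; hence the corresponding rank one local system $\L_{ta}$ is nontrivial.

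First I would record the nilpotent input: for a finitely generated nilpotent group $G$ and any nontrivial rank one local system $\L$, the twisted cohomology $H^*(G,\L)$ vanishes, equivalently the characteristic varieties satisfy $\mathcal V_1^i(G)\subseteq\{1\}$ for all $i$, so their tangent cones at the origin are $\{0\}$. This I would prove by induction on the nilpotency class via the Hochschild--Serre spectral sequence of a central extension $1\to Z\to G\to G/Z\to 1$ with $Z\cong\Z$: either $\L$ restricts nontrivially to $Z$, where $H^*(Z,\L)=0$ already collapses the sequence, or $\L$ descends to the nilpotent quotient $G/Z$ of smaller class, where the inductive hypothesis applies; the abelian base case is the Koszul computation on a torus.

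The formality bridge is the heart of the argument. Here I would invoke the finite $s$-formality test (Proposition \ref{art}) together with Theorem \ref{B}. Through degree $s$, $s$-formality makes the minimal model of $K(G,1)$ agree with the bigraded formal model built from $A=H^*(G,\k)$, and Theorem \ref{B}\eqref{motz} ensures that $A^{\le s+1}$ is generated by $H^1(G)$, that is, $A$ is concentrated in weight equal to degree through degree $s+1$. On this bigraded model the perturbed differential $d+ta$, with $ta$ a closed weight-one generator, gives the weight-filtration spectral sequence whose $E_1$-term is $A$ with first differential $\cup(ta)$, so that $E_2=H^*(A,\cup(ta))$ is the resonance cohomology; this sequence abuts to the twisted cohomology $H^*\bigl(K(G,1),\L_{ta}\bigr)=H^*(G,\L_{ta})$. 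The remaining task is to show that $s$-formality, through the alignment of weight with homological degree supplied by Theorem \ref{B}, forces degeneration in total degrees $\le s$, so that $\dim H^i(G,\L_{ta})=\dim H^i(A,\cup(ta))$ for $i\le s$.

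Granting the degeneration, the proof closes: if some $a\neq 0$ lay in $\RR_1^i(G)$ with $i\le s$, then $H^i(A,\cup(ta))\neq 0$ for every $t$, whence $H^i(G,\L_{ta})\neq 0$; but for generic $t$ the system $\L_{ta}$ is nontrivial, contradicting the vanishing of twisted cohomology of the nilpotent group $G$. Thus $\RR_1^i(G)\subseteq\{0\}$ for $i\le s$. The main obstacle is exactly the degeneration step: the weight-filtration spectral sequence yields only the inequality $\dim H^i(G,\L_{ta})\le\dim H^i(A,\cup(ta))$ for free, and one must use the full force of $s$-formality, not merely the degree-one generation of $A$ (which already fails to suffice for non-formal groups such as $\mathcal H_1$, where $\RR_1^1=H^1$ is nontrivial while the characteristic variety is trivial), in order to kill the higher differentials $d_r$ entering or leaving total degree $i\le s$. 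This is the higher-degree analogue, valid in the formal range, of the identification of $\RR_1^1$ with the tangent cone of the first characteristic variety.
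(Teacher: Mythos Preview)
Your proposal outlines an approach via characteristic varieties and a spectral sequence linking Aomoto (resonance) cohomology to twisted cohomology, but you explicitly leave the decisive degeneration step unproved. As you yourself note, the spectral-sequence inequality $\dim H^i(G,\L_{ta})\le\dim H^i(A,\cup\,ta)$ points the wrong way: knowing that the twisted cohomology vanishes (from nilpotency) gives no control on the resonance side. You need the reverse inequality---equivalently, degeneration at $E_2$ in total degree $\le s$---and for this you offer only the heuristic that $s$-formality, via the weight~$=$~degree alignment from Theorem~\ref{B}\eqref{motz}, ``should'' kill the higher differentials. That higher-degree tangent-cone statement is not a formal consequence of Proposition~\ref{art} or Theorem~\ref{B}; establishing it is essentially as hard as the theorem itself. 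Your own example of $\mathcal{H}_1$ shows the danger: there the characteristic variety is trivial while $\RR_1^1$ is all of $H^1$, so the implication you want genuinely depends on a formality hypothesis whose mechanism you have not unpacked. As written, then, the argument has a real gap at exactly the place you flag.

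The paper's proof is entirely different and avoids characteristic varieties and local systems altogether. Its key input is Lemma~\ref{bigrad}: if $\RR_1^q(H^*)\not\subseteq\{0\}$, then the bigraded minimal model $\mathcal{B}$ of $H^*$ is infinite-dimensional in degree $q$ (one manufactures an infinite sequence of nonzero elements $\alpha_i\in\mathcal{B}^q_i$ with $d\alpha_{i+1}=\omega\alpha_i$). The theorem then follows by a dimension count: $s$-formality gives $\mathcal{M}_s(G)\cong{}_s\mathcal{B}$, while nilpotency and finite generation force $\mathcal{M}(G)=\mathcal{M}_1(G)=\mathcal{M}_s(G)$ to be a finite-dimensional vector space; since $\mathcal{B}^q={}_s\mathcal{B}^q$ for $q\le s$, the contrapositive of Lemma~\ref{bigrad} yields $\RR_1^q(G)\subseteq\{0\}$. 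This is purely algebraic and two lines long once Lemma~\ref{bigrad} is in hand.
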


Note that this result, via our Lemma \ref{non}, generalizes  \cite[Lemma 2.4]{CT}, 
corresponding to the case $s=1$. 
 
For fundamental groups of complements of arrangements of complex hyperplanes, 
it is well-known that 1-formality holds. It turns out 
that our nilpotency test from Theorem \ref{zerores}, via resonance, is faithful (see Example \ref{arr}).

In Section \ref{section:heisenberg} we make an analysis of the formality properties for 
Heisenberg-type groups from a double perspective - generators of the cohomology ring and resonance varieties.

\section{Partial minimal models and formality properties}
\label{sect:2step}

For D. Sullivan's theory of minimal models  we refer to \cite{S} (see also 
\cite{DGMS}, \cite{FHT}, \cite{HS} and \cite{M}).

Let $({A}^{*}, d_A)$ be a differential graded algebra (D.G.A.) over a field $\k$ of characteristic 
zero, such that $H^0(A^*, d_A)$ is the ground field. A minimal model for ${A}^{*}$ is a minimal 
D.G.A. $(\mathcal{M}, d_{\mathcal{M}})$ such that there exists a morphism of D.G.A.'s 
$\rho : \mathcal{M} \longrightarrow A^{*}$ inducing isomorphism on cohomology. There is a 
unique (up to isomorphism) $\mathcal{M}=\mathcal{M}(A)$ satisfying the conditions in the definition.

Let $K$ be a space having the homotopy type of a connected simplicial complex. We call 
the minimal model of $K$, denoted $\mathcal{M}(K)$, the minimal model associated to the D.G.A. 
of p.l. forms  $\Omega ^{*}(K)$.

A D.G.A. $A^*$ as above is called formal if there exists a D.G.A. morphism  
$(\mathcal{M}(A), d_{\mathcal{M}}) \longrightarrow (H^{*}(A),d=0)$ 
which induces isomorphism in cohomology.

$K$ is formal if and only if the minimal model of $K$ is a formal D.G.A, i.e. 
$ \mathcal{M}(K)=\mathcal{M}(H^{*}(K), d=0)$.

\begin{definition}
\label{k-mod}
 A minimal algebra $\mathcal{M}$ generated by elements of degree $\leq k$ is called 
 a $k$-minimal model of a D.G.A. $(A^*, d_A)$ if there exists a D.G.A. map 
 $\rho: \mathcal{M} \longrightarrow A$ such that it  induces in cohomology isomorphisms 
 up to degree $k$ and a monomorphism in degree $k+1$. Again, such an object exists and is 
 uniquely determined, up to isomorphism, for any D.G.A. $(A^*, d_A)$. 
 Notation: $\mathcal{M}=\mathcal{M}_k(A)$.
\end{definition}
 
 \begin{remark}
 \label{scale}
 If $\mathcal{M}=(\wedge V,d)$ is a minimal algebra, then 
 $\mathcal{M}_k(\mathcal{M})= (\wedge V^{\leq k},d)$. For a space $K$, 
 set $\mathcal{M}_k(K):=\mathcal{M}_k(\mathcal{M}(K))$.
 \end{remark}
 
 \begin{example}
 \label{ex:gr}
 Assume $H^*(K)=\wedge (x_1, \dots, x_n)$, as rings.
 Then the minimal model of $K$ is $\mathcal{M}(K)=(\wedge (x_1, \dots, x_n), d=0)$, 
 hence $K$ is formal. 
 If deg$(x_i)=1$, for all $i$, then $\mathcal{M}_1(K)=\mathcal{M}(K)$; it follows from \cite{S} 
 that the rational associated graded Lie algebra of $G=\pi_1(K), \; \gr(G) \otimes \mathbb{Q}$, is abelian,
 concentrated in degree $1$ (i.e. $\gr(G)_{\ge 2} \otimes \mathbb{Q}=0$).
 \end{example}
 
 \begin{definition}
 \label{DGA k-formal}
 A D.G.A. $(A^*,d_A)$ is called $k$-{\em formal} if there exists a D.G.A. morphism  
 $(\mathcal{M}_k(A), d) \to (H^{*}(A),0)$ which induces isomorphisms in cohomology up to degree $k$ 
 and a monomorphism in degree $k+1$.
 \end{definition}
 
\begin{definition}
\label{s-formality}
The space $K$ is called $k$-{\em formal} if $\mathcal{M}(K)$ is a $k$-formal D.G.A. 
In other words, $\mathcal{M}_k(K)=\mathcal{M}_k(H^*(K),0)$.
\end{definition}

A group $G$ is called formal (respectively $k$-formal) if the associated Eilenberg-MacLane 
space $K(G,1)$ is formal (respectively $k$-formal).
This convention (replace the  $K(G,1)$ space by  the group $G$) will be used from now on.

Note that a formal space is $k$-formal, for any $k$. 
A partial converse will be proved later, in Proposition \ref{k+2}.

Recall from \cite{W} that a continuous map between connected $CW$-complexes 
$f:X \longrightarrow Y$ is called a 
$k$-homotopy equivalence if it induces isomorphisms on homotopy groups up to degree $k-1$ 
and a surjection in degree $k$.
 Up to homotopy, we can see $f$ as an inclusion. Then from the long exact homotopy sequence 
 associated to the pair $(Y,X)$ we get $\pi _{\leq k}(Y,X)=0$, hence, by  the Hurewicz theorem 
 (relative version), $H_{\leq k}(Y,X)=0$. It follows that $H^{\leq k}(Y,X)=0$, and we can apply 
 the long exact cohomology sequence of the pair to conclude that we have 
 $H^{\leq k-1}(X)\cong H^{\leq k-1}(Y)$  and an injection $H^k(Y)\hookrightarrow H^k(X)$, 
 both induced by the inclusion $f$.
 Therefore a $k$-homotopy equivalence is a \textit{homology $k$-equivalence} 
 (that is, a map which induces isomorphisms on cohomology groups 
 up to degree $k-1$ and a monomorphism in degree $k$).

\begin{prop}
\label{L}
Let $f:X \longrightarrow Y$ be a homology $k$-equivalence. Then 
\begin{equation}
\label{k-1}
\mathcal{M}_{k-1}(X) \cong \mathcal{M}_{k-1}(Y)
\end{equation}
and 
\begin{equation}
\label{m^k-1}
\mathcal{M}_{k-1}(H^*(X),0) \cong \mathcal{M}_{k-1}(H^*(Y),0)
\end{equation}
\end{prop}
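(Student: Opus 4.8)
The plan is to exploit the uniqueness clause in Definition~\ref{k-mod}: any minimal algebra generated in degrees $\le k-1$ that admits a D.G.A.\ map to a given D.G.A.\ inducing isomorphisms on cohomology through degree $k-1$ and a monomorphism in degree $k$ must be the $(k-1)$-minimal model of that D.G.A., up to isomorphism. The whole argument then reduces to producing, out of the data attached to $Y$, such a map landing in the forms (resp.\ cohomology ring) of $X$.

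For the first isomorphism \eqref{k-1}, I would start from the structural map $\rho_Y : \mathcal{M}_{k-1}(Y) \to \Omega^*(Y)$ defining the $(k-1)$-minimal model of $Y$, and compose it with the map on p.l.\ forms $f^* : \Omega^*(Y) \to \Omega^*(X)$ induced (contravariantly) by $f$. This yields a D.G.A.\ map $\Phi = f^* \circ \rho_Y : \mathcal{M}_{k-1}(Y) \to \Omega^*(X)$. On cohomology, $H(\rho_Y)$ is an isomorphism through degree $k-1$ and a monomorphism in degree $k$ by definition, while $f^*$ enjoys exactly the same property because $f$ is a homology $k$-equivalence. Composing, $H(\Phi)$ is an isomorphism through degree $k-1$ (a composite of isomorphisms) and a monomorphism in degree $k$ (a composite of monomorphisms). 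Since $\mathcal{M}_{k-1}(Y)$ is, as an algebra, minimal and generated in degrees $\le k-1$, the pair $(\mathcal{M}_{k-1}(Y),\Phi)$ satisfies the defining conditions of a $(k-1)$-minimal model for $\Omega^*(X)$, and uniqueness forces $\mathcal{M}_{k-1}(X)\cong\mathcal{M}_{k-1}(Y)$.

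For the second isomorphism \eqref{m^k-1}, I would run the identical argument with the cohomology rings in place of the forms. Here the relevant map is the ring homomorphism $f^* : (H^*(Y),0)\to(H^*(X),0)$, viewed as a D.G.A.\ map between algebras with zero differential; its induced map on cohomology is simply $f^*$ itself, hence again an isomorphism through degree $k-1$ and a monomorphism in degree $k$. Composing it with the structural map $\mathcal{M}_{k-1}(H^*(Y),0)\to(H^*(Y),0)$ and invoking uniqueness exactly as before gives $\mathcal{M}_{k-1}(H^*(X),0)\cong\mathcal{M}_{k-1}(H^*(Y),0)$.

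The substance of the statement is really the degree bookkeeping, and that is the point I would watch most carefully: a homology $k$-equivalence supplies isomorphisms only through degree $k-1$ and injectivity in degree $k$, which is precisely --- and only --- enough to pin down the $(k-1)$-minimal model (whose defining map needs an isomorphism through $k-1$ and a monomorphism in $k$); it would not suffice for $\mathcal{M}_k$. I do not expect a genuine obstacle beyond verifying that the composite still induces a monomorphism in the top degree $k$, which holds because a composite of monomorphisms is a monomorphism and the image of $H(\rho_Y)$ in degree $k$ lands inside the domain $H^k(Y)$ on which $f^*$ is injective.
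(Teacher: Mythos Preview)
Your proposal is correct and follows essentially the same argument as the paper's own proof: compose the structural map $\rho:\mathcal{M}_{k-1}(Y)\to\Omega^*(Y)$ with $f^*$, check the resulting map satisfies the defining properties of a $(k-1)$-minimal model for $X$, and invoke uniqueness; then repeat verbatim for the cohomology algebras. Your version spells out the degree bookkeeping more explicitly, but there is no substantive difference.
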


\begin{proof}
 Let  $\rho :\mathcal{M}_{k-1}(Y) \longrightarrow \Omega^{*}(Y) $ be a map as 
 in the definition of the $k$-minimal model. Since $f$ is a homology $k$-equivalence, the map  
 $f^* \circ \rho: \mathcal{M}_{k-1}(Y) \longrightarrow \Omega^*(X)$ satisfies 
 the conditions from the definition of the $k$-minimal model, so $\mathcal{M}_{k-1}(Y)$ 
 is also the $(k-1)$-minimal model of $X$.
 A similar proof shows that $\mathcal{M}_{k-1}(H^*(X),0) \cong \mathcal{M}_{k-1}(H^*(Y),0)$. 
\end{proof}

\begin{corollary}
\label{form}
Assume $f$ is a homology $k$-equivalence. Then $X$ is $(k-1)$-formal 
if and only if $Y$ is $(k-1)$-formal.
\end{corollary}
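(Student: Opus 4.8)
The plan is to reduce the statement directly to Proposition \ref{L} together with the definition of $(k-1)$-formality (Definition \ref{s-formality}). Recall that a space $K$ is $(k-1)$-formal precisely when $\mathcal{M}_{k-1}(K) \cong \mathcal{M}_{k-1}(H^*(K),0)$ as minimal D.G.A.'s. Thus the entire question amounts to comparing two $(k-1)$-minimal models: the one built from the p.l.\ forms, and the one built from the cohomology ring with zero differential. The idea is that a homology $k$-equivalence leaves \emph{both} of these invariant (up to isomorphism), so the defining relation for one space transports to the other.

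First I would invoke Proposition \ref{L}: since $f$ is a homology $k$-equivalence, it yields the two isomorphisms $\mathcal{M}_{k-1}(X) \cong \mathcal{M}_{k-1}(Y)$ and $\mathcal{M}_{k-1}(H^*(X),0) \cong \mathcal{M}_{k-1}(H^*(Y),0)$. These are the only inputs needed. Assuming next that $X$ is $(k-1)$-formal, I would simply chain the isomorphisms:
\[
\mathcal{M}_{k-1}(Y) \cong \mathcal{M}_{k-1}(X) \cong \mathcal{M}_{k-1}(H^*(X),0) \cong \mathcal{M}_{k-1}(H^*(Y),0),
\]
where the middle isomorphism is the hypothesis that $X$ is $(k-1)$-formal, and the two outer ones come from Proposition \ref{L}. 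This exhibits $\mathcal{M}_{k-1}(Y) \cong \mathcal{M}_{k-1}(H^*(Y),0)$, i.e.\ $Y$ is $(k-1)$-formal. The reverse implication follows by reading the same chain backwards, using the symmetric roles of $X$ and $Y$.

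Since everything reduces to composing isomorphisms already provided by Proposition \ref{L} and the uniqueness (up to isomorphism) of $(k-1)$-minimal models recorded in Definition \ref{k-mod}, I do not expect any genuine obstacle. The only point requiring a small amount of care is that $(k-1)$-formality is phrased as the mere \emph{existence} of an isomorphism of minimal models, rather than the commutativity of a specific diagram; hence it suffices to concatenate the abstract isomorphisms, and no naturality of the maps constructed in Proposition \ref{L} needs to be checked.
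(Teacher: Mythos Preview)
Your proof is correct and follows exactly the approach the paper intends: the corollary is stated without proof precisely because it is an immediate consequence of Proposition~\ref{L} combined with Definition~\ref{s-formality}, and your chain of isomorphisms makes this explicit. Your closing remark about only needing an abstract isomorphism (rather than naturality) is also on point.
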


\begin{remark}
\label{remFM}
The previous result is similar to \cite[Theorem 5.2(i)]{FM}, where only 
"$Y$ $(k-1)$-formal $\Rightarrow$ $X$ $(k-1)$-formal" is shown, but uses a different notion 
of partial formality, as explained in Remark \ref{FernandezMunoz}. With the definition 
of partial formality in the sense of \cite{FM}, Theorem \ref{Thm} no longer holds. 
This can be seen by considering $M$ to be the projective smooth complex variety with 
fundamental group a Heisenberg group $\mathcal{H}_{n \geq 4}$ (see Definition \ref{heisenberg}), 
constructed by Campana (\cite{C}). Then $M$ is formal, hence $k$-formal in the sense of 
\cite[Definition 2.2]{FM}, for any $k$, but $\pi_1 (M)$ is not even $1$-formal in the sense of \cite{FM} 
(by \cite[Lemma 2.6]{FM}). Were Theorem \ref{Thm} true for $k=2$ it would imply that $\pi_2(M) \neq 0$. 
One can deduce that $\pi_2(M)=0$ from the construction of $M$ 
(see \cite{C}, or \cite[Section 5]{CT}), when $n \geq 6$.
\end{remark}

Theorem \ref{Thm} follows from the result below:
\begin{theorem}
\label{thm:1.1strong}
Assume either $M$ is a $k$-formal space such that $\pi_1(M)$ is not $k$-formal or 
$M$ is not $k$-formal and $\pi_1(M)$ is $k$-formal, where $k\ge 2$. Then there exists 
$2 \leq i \leq k$ such that $\pi_i(M) \neq 0$.
\end{theorem}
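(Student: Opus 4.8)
The plan is to argue by contradiction. Suppose that $\pi_i(M)=0$ for every $i$ with $2\le i\le k$; I will show this forces $M$ and $\pi_1(M)$ to have the \emph{same} $k$-formality status, which contradicts both of the disjuncts allowed by the hypothesis.

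First I would invoke the canonical classifying map $f\colon M\to K(\pi_1(M),1)$ (the first stage of the Postnikov tower of $M$), which induces an isomorphism on fundamental groups. The main point is to verify that, under the vanishing assumption, $f$ is a $(k+1)$-homotopy equivalence in the sense recalled before Proposition \ref{L}, i.e. that it induces isomorphisms on $\pi_i$ for $i\le k$ and a surjection on $\pi_{k+1}$. On $\pi_1$ this holds by construction; for $2\le i\le k$ both sides vanish (the source by hypothesis, the target because $K(\pi_1(M),1)$ is aspherical), so $f$ is an isomorphism there; and in degree $k+1$ the target is again trivial, so the surjectivity requirement is automatic, \emph{regardless} of the value of $\pi_{k+1}(M)$. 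This last observation is exactly what lets me gain one degree and reach $k$-formality rather than only $(k-1)$-formality.

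Next I would use the discussion preceding Proposition \ref{L}, according to which a $(k+1)$-homotopy equivalence is in particular a homology $(k+1)$-equivalence. Applying Corollary \ref{form} with $k+1$ in place of $k$ to the map $f$ then yields that $M$ is $k$-formal if and only if $K(\pi_1(M),1)$ is $k$-formal; by the convention identifying the $k$-formality of a group $G$ with that of $K(G,1)$, this says precisely that $M$ is $k$-formal if and only if $\pi_1(M)$ is $k$-formal.

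This equivalence contradicts each case of the hypothesis: the first disjunct asserts that $M$ is $k$-formal while $\pi_1(M)$ is not, and the second asserts the reverse. Hence the vanishing assumption must fail, which is exactly the assertion that $\pi_i(M)\neq 0$ for some $2\le i\le k$. I expect the only delicate point to be the bookkeeping in the homotopy-group degrees: in particular, making the correct choice of index ($k+1$ rather than $k$) when invoking Corollary \ref{form}, and checking that the top-degree surjectivity in the definition of a homotopy equivalence costs nothing here because the higher homotopy of the target vanishes.
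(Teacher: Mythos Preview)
Your proof is correct and follows exactly the same route as the paper: assume $\pi_i(M)=0$ for $2\le i\le k$, use the classifying map $M\to K(\pi_1(M),1)$ as a $(k+1)$-homotopy equivalence, and invoke Corollary~\ref{form} to conclude that $M$ and $\pi_1(M)$ have the same $k$-formality status. Your write-up is simply more explicit than the paper's about the degree-by-degree verification of the homotopy-equivalence condition and the index shift in applying Corollary~\ref{form}.
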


\begin{proof}
Assume $\pi_i(M)=0, \; \forall \; 2 \leq i \leq k$ and set $\pi_1(M):=H$.
Consider the classifying map $f:M \longrightarrow K(H,1)$ such that $\pi_1(f)=\id_{H}$. 
This map is then a $(k+1)$-homotopy equivalence. Then $M$ is $k$-formal if and only if $H$ is $k$-formal, 
by Corollary \ref{form}, contradicting our hypothesis.
\end{proof}

\section{Obstructions to partial formality}
\label{obstr}

 We begin by giving an alternative characterization of (partial) formality, along the lines 
 from \cite{DGMS} and \cite{FM}.
 
 We will need several basic properties of the bigraded minimal model of a connected, 
 graded-commutative algebra $H^*$, extracted from \cite{HS}.

As an algebra, $\mathcal{B}= \wedge Z$, where $Z$ is bigraded by 
$Z= \oplus_{i \geq 0,\;p \geq 1}Z^p_i$. The differential $d$ has degree $+1$ with respect to 
upper degrees and is of degree $-1$ with respect to lower degrees. Moreover,

\begin{equation}
\label{basic}
H_{+}(\mathcal{B}, d)=0
\end{equation}

 The $k$-minimal model of $\mathcal{B}$ will be denoted by 
 $_{k} \mathcal{B}:= (\wedge Z^{\leq k},d)$.
 
 For a D.G.A. $(\wedge V, d)$, set $C^*= \Ker (d|_{V^*})$.

\begin{prop}
\label{art}
The following hold.
\begin{enumerate}
\item \label{i} 
A D.G.A. $(A^*, d_A)$  is $k$-formal ($1 \leq k \leq \infty$) 
if and only if it has a $k$-minimal model $\mathcal{M}_k=( \wedge V^{\leq k}, d)$ with 
a decomposition $V^{\leq k}=C^{\leq k} \oplus N^{\leq k}$ such that 
$(N  \cdot \mathcal{M}_k \cap \Ker\;d)^{\leq k+1} \subset d\mathcal{M}_k$, 
where $N=\oplus_{1 \leq i \leq k}N^i$.
\item \label{ii}
Assume  $\mathcal{M}_k=( \wedge V^{\leq k}, d)$ has the property from Part \eqref{i} above.
Let $\phi: \wedge C^{\leq k} \longrightarrow H^*(\mathcal{M}_k)$ be the map of 
graded algebras which associates to an element in $\wedge C^{\leq k}$  its cohomology class. 
Then $\phi$ is surjective up to degree $k+1$.
\item \label{iii}
If $M$ is a $k$-formal space and a rational $K(\pi,1)$, then the cohomology algebra of $M$ 
is generated by $H^1(M)$, up to degree $k+1,\; i.e.$ $H^{\leq k+1}(M)=(H^1(M))^{ \leq k+1}$.
\end{enumerate}
\end{prop}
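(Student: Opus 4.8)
The plan is to treat Part \eqref{i} as the main engine and to read off Parts \eqref{ii} and \eqref{iii} from it. The starting observation is that a morphism $\mathcal{M}_k(A)\to(H^*(A),0)$ inducing isomorphisms up to degree $k$ and a monomorphism in degree $k+1$ exhibits $\mathcal{M}_k(A)$ itself as a $k$-minimal model of $(H^*(A),0)$; by the uniqueness clause of Definition \ref{k-mod}, $k$-formality of $A^*$ is therefore equivalent to an isomorphism $\mathcal{M}_k(A)\cong\mathcal{M}_k(H^*(A),0)={}_k\mathcal{B}$, where $\mathcal{B}=\wedge Z$ is the bigraded model of $H^*(A)$ (cf.\ Remark \ref{scale}). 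This reduction is what lets me import the extra lower-grading structure of $\mathcal{B}$.

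For the ``only if'' direction of \eqref{i} I would transport the lower grading of ${}_k\mathcal{B}$ across this isomorphism. In $\mathcal{B}=\wedge Z$ the generators split as $Z=\bigoplus_{i\ge 0}Z_i$ with $d$ of lower degree $-1$, the generators of lower degree $0$ being exactly the closed ones; so setting $C^{\le k}=Z_0^{\le k}=\Ker(d|_{V})$ and $N^{\le k}=\bigoplus_{i\ge 1}Z_i^{\le k}$ furnishes the required complement. Every monomial in the ideal $N\cdot{}_k\mathcal{B}$ has positive lower degree, hence a closed element of this ideal is a closed element of positive lower degree; since the cohomology of $\mathcal{B}$ is concentrated in lower degree $0$ by \eqref{basic}, such an element is exact, which is precisely $(N\cdot\mathcal{M}_k\cap\Ker d)^{\le k+1}\subset d\mathcal{M}_k$. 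The point needing care is that this exactness must be read in the truncated model ${}_k\mathcal{B}$ rather than in $\mathcal{B}$ and is only guaranteed through degree $k+1$; I would justify the passage from $\mathcal{B}$ to ${}_k\mathcal{B}$ in that range using that the two agree in cohomology up to degree $k$ and inject in degree $k+1$.

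For the ``if'' direction I would build the formality morphism by hand. Given a $k$-minimal model $\rho:\mathcal{M}_k\to A$ with the stated decomposition, define the algebra map $\psi:\mathcal{M}_k\to(H^*(A),0)$ by $\psi(c)=[\rho(c)]$ for $c\in C$ and $\psi(n)=0$ for $n\in N$. It is a chain map because for $n\in N$ the element $dn$ is closed, so its ideal part $(dn)_N$ lies in $(N\cdot\mathcal{M}_k\cap\Ker d)^{\le k+1}\subset d\mathcal{M}_k$ and is exact, forcing the pure-$C$ part $(dn)_C=dn-(dn)_N$ to be exact too, whence $\psi(dn)=[\rho((dn)_C)]=0$. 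Writing any closed class as $\omega=\omega_C+\omega_N$ with $\omega_N$ exact shows that $\psi$ and $\rho$ agree on $H^{\le k+1}$, so $\psi$ inherits from $\rho$ the isomorphism up to degree $k$ and the monomorphism in degree $k+1$ demanded by Definition \ref{DGA k-formal}.

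Part \eqref{ii} is then immediate from the same splitting: a closed $\omega$ of degree $\le k+1$ is $\omega_C+\omega_N$ with $\omega_N$ closed in the ideal, hence exact, so $[\omega]=[\omega_C]=\phi(\omega_C)$, and $\omega_C\in\wedge C^{\le k}$ because $\mathcal{M}_k$ has no generators above degree $k$; this is surjectivity of $\phi$ through degree $k+1$. Finally, Part \eqref{iii} follows by feeding the rational $K(\pi,1)$ hypothesis into \eqref{ii}: then the minimal model has generators only in degree $1$, so $\mathcal{M}_k(M)=\mathcal{M}(M)$ computes $H^*(M)$ in all degrees and $C^{\le k}=C^1\subset V^1$, whence $\wedge C^{\le k}=\wedge C^1$ is generated in degree $1$. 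Surjectivity of $\phi$ up to degree $k+1$ then forces $H^{\le k+1}(M)$ to be generated by $\phi(C^1)\subseteq H^1(M)$, i.e.\ $H^{\le k+1}(M)=(H^1(M))^{\le k+1}$. The main obstacle throughout is the ``only if'' direction of \eqref{i}: making the bigraded-model machinery of \cite{HS} interact correctly with the degree-$\le k$ truncation, so that the concentration of cohomology in lower degree $0$ recorded in \eqref{basic} still delivers the ideal condition in the restricted range.
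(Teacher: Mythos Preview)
Your proposal is correct and follows essentially the same route as the paper: the ``only if'' direction of \eqref{i} via the bigraded model $\mathcal{B}$ with $C=Z_0$ and $N=Z_{>0}$, the ``if'' direction by sending $C$ to its cohomology class and $N$ to zero, and Parts \eqref{ii}--\eqref{iii} read off from the splitting $\omega=\omega_C+\omega_N$ with $\omega_N$ exact. The one point to sharpen is your passage from $\mathcal{B}$ to ${}_k\mathcal{B}$: you propose to use that their cohomologies agree through degree $k$ and inject in degree $k+1$, but that does not directly place the primitive in ${}_k\mathcal{B}$. The actual reason is simpler and purely algebraic: if $x\in (N\cdot{}_k\mathcal{B}\cap\Ker d)^{q}$ with $q\le k+1$, then $x=dz$ in $\mathcal{B}$ with $z\in\mathcal{B}^{q-1}$, and since $q-1\le k$ one has $\mathcal{B}^{q-1}={}_k\mathcal{B}^{q-1}$ on the nose, so $z$ already lies in the truncated model.
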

 
\begin{proof} 
\eqref{i} 
 If $A^*$ is $k$-formal, then there is a D.G.A. isomorphism 
 $ \mathcal{M}_k(A) \cong \;_{k}\mathcal{B}$, where $\mathcal{B}$ is the bigraded model 
 of $H^*(A)$. The required decomposition of $Z^{\le k}$ is as follows: $C^q=Z_0^q$ and 
 $N^q= \oplus_{i>0}Z^q_i$, for $q \leq k$. Let $ x \in N \cdot_k\mathcal{B} \cap \Ker\;d$, 
 be homogeneous of upper degree $q \leq k+1$. The lower degree of each component of $x$ 
 is strictly positive, hence $x=d(z)$, for some $z \in \mathcal{B}^{q-1}= \;_k\mathcal{B}^{q-1}$, 
 since  $H_+(\mathcal{B})=0$ and $q \leq k+1$.
 
 To prove the converse claim, define a G.A. map 
 \begin{equation}
 \label{ro}
 \rho: \;\mathcal{M}_k  \longrightarrow H^*(\mathcal{M}_k)
 \end{equation}
 by $c \mapsto [c]$  and $n \mapsto 0$, for $c \in C^{\leq k}, \; n \in N^{\leq k}$.

We begin by showing that $\rho$ is in fact a D.G.A. map, that is $\rho(d(v))=0$, 
for any $v \in V^{\leq k}$.

For  $c \in C^{\leq k}$ this is true, since $d(c)=0$.

Take $n\in N^{\leq k}$. We can write 
$d(n)= \overline{n} +\overline{c}, \; \overline{n} \in N \cdot \mathcal{M}_k, \; \overline{c} \in \wedge C^{\leq k}$. 
Then $\overline{n} \in (N \cdot \mathcal{M}_k \cap \Ker\;d)^{\leq k+1}$, so $\overline{n}$ 
is a boundary in $\mathcal{M}_k$, which implies $\rho(d(n))=[\overline{c}]=0$.

Let us prove now the injectivity of the map $H^q(\rho)$, for $q \leq k+1$. Take 
$\alpha \in \; \mathcal{M}_k= \wedge V^{\leq k}$ homogeneous of degree $q$, such that 
$H^q(\rho)([\alpha])=0$ and write $\alpha$ as a sum $\alpha= \alpha_1+ \alpha_2$, where 
$\alpha_1 \in \wedge C^{\leq k}$ and $\alpha_2 \in N \cdot \mathcal{M}_k$. It follows that 
$d(\alpha_2)=d(\alpha-\alpha_1)=0$, hence $\alpha_2=d(z), \; z \in \mathcal{M}_k$, so 
$[\alpha]=[\alpha_1]$. This last equality, together with $H^q(\rho)([\alpha])=0$ implies 
$[\alpha]=[\alpha_1]=0$, hence $H^q(\rho)$ is injective.

We prove the surjectivity of $H^q(\rho)$, for $q \leq k+1$. Let $[\alpha] \in H^q(\mathcal{M}_k)$, 
with $\alpha$ written just as before as a sum $\alpha=\alpha_1+\alpha_2$. Again $\alpha_2$ is exact 
in $\mathcal{M}_k$, and $H^q(\rho)([\alpha_1])= [\alpha]$, hence surjectivity is proven.

To end the proof, consider the composition of $\rho$ with the map induced in cohomology by 
the map in Definition \ref{k-mod}.

\eqref{ii}
The proof of the fact that $\phi$ is surjective up to degree $k+1$ goes  the same way as 
the proof of the surjectivity of $H^q(\rho)$ in Part \eqref{i}.

\eqref{iii} We infer from $k$-formality the existence of a $k$-minimal model of $M,\; \mathcal{M}_k$, 
having the property from Part \eqref{i}. Since $M$ is a rational $K(\pi,1)$, 
$V^{\leq k}=V^1$ and $C^{\le k}=C^1$. Moreover, $\mathcal{M}_k$ is actually the minimal model of $M$. 
Our claim follows by considering the map $\phi: \wedge C^1 \longrightarrow H^*(M)$ 
defined in Part \eqref{ii}.
\end{proof}

Proposition \ref{art}(1) 
shows that our notion of $k$-formality from Definition \ref{DGA k-formal} is less
restrictive than the one from \cite[Definition 2.2]{FM}. Actually, the requirements in
\cite{FM} are strictly stronger than ours;
see Remark \ref{FernandezMunoz}. In \ref{art}(2) we assume less than in \cite{FM}, and we obtain more.
  
\begin{definition}
\label{2-step}
A 2-step nilpotent group is a group $G$ such that 
$[G, [G,G]]=0$, where $[,]$ denotes the group commutator.
\end{definition}

\begin{remark}
\label{alt}
To say that $G$ is a finitely generated 2-step nilpotent group is equivalent to say that 
$G$ is a central  extension of some finite rank abelian group by another finite rank abelian group.
\end{remark}

{\bf Proof of Theorem \ref{B}}
\eqref{motz}
Follows from Proposition \ref{art} Part \eqref{iii}, due to the fact that 
$K(G,1)$ is a rational $K(\pi,1)$.

\eqref{converse}
  Since $G$ is two-step nilpotent, we can choose the generators of the minimal model 
  such that $\mathcal{M}(G)= \wedge (x_1, \dots, x_m) \otimes \wedge (y_1, \dots, y_n)$, 
  deg$(x_i)=$ deg$(y_j)=1$, 
  $d(y_j) \in \wedge^2(x_1, \dots ,x_m), \forall j$ and $C^1=<x_1, \dots, x_m>$.
 
 Define then a map $\phi$ (see Definition \ref{DGA k-formal}),  
 $\phi : (\mathcal{M},d) \longrightarrow (H^*(G),0)$
 by $x_i \mapsto [x_i]$ and $y_j \mapsto 0$. It is easy to see that $\phi$ is a D.G.A. 
 morphism and $H^1(\phi)$ is the identity. As a consequence of the fact that 
 $H^{\leq k+1}(G)=(H^1(G))^{ \leq k+1}$, $H^{\leq k+1}(\phi)$ is also the identity, 
 being induced by $H^1(\phi)$. This proves the $k$-formality of $G$. \hfill $\square$

\begin{corollary}[\cite{Has}]
\label{nilmanifold} 
If $G$ is a finitely generated nilpotent group, then $G$ is formal 
if and only if it is rationally abelian.
\end{corollary}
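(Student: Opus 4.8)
The plan is to prove the two implications separately, handling the easy reverse direction first and then reducing the forward direction to Theorem \ref{B}.

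First I would treat the "if" direction. Suppose $G$ is rationally abelian of rank $n$. Then its rational cohomology agrees with that of its Malcev completion, which is an exterior algebra $H^*(G,\Q)\cong \wedge(x_1,\dots,x_n)$ on $n$ generators in degree $1$. By Example \ref{ex:gr} the minimal model is $(\wedge(x_1,\dots,x_n),d=0)$, so $G$ is formal. This settles one direction immediately, with no real difficulty.

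For the converse, assume $G$ is formal; I must show it is rationally abelian. Since a formal space is $k$-formal for every $k$, Theorem \ref{B}\eqref{motz} applies with arbitrary $k$ and gives that $H^{\le k+1}(G)$ is generated as an algebra by $H^1(G)$ for all $k$; hence the whole ring $H^*(G,\Q)$ is generated in degree $1$. The key idea is then a dimension count in top degree. Replacing $G$ by $G/\mathrm{tors}$ leaves the rational cohomology unchanged, so I may assume $G$ is torsion-free, a lattice in a simply connected nilpotent Lie group with Lie algebra $\mathfrak{g}$ of dimension $n$. By Nomizu's theorem $H^*(G,\Q)\cong H^*(\mathfrak{g})$, and since nilpotent Lie algebras are unimodular, Poincar\'e duality gives $H^n(G,\Q)\cong \Q\ne 0$, while $H^1(G,\Q)\cong(\mathfrak{g}/[\mathfrak{g},\mathfrak{g}])^*$ has dimension $n-\dim[\mathfrak{g},\mathfrak{g}]$. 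If $G$ were not rationally abelian then $[\mathfrak{g},\mathfrak{g}]\ne 0$, so $\dim H^1(G,\Q)<n$ and therefore $\wedge^n H^1(G,\Q)=0$; but then no product of degree-$1$ classes can reach the nonzero group $H^n(G,\Q)$, contradicting generation in degree $1$. Hence $[\mathfrak{g},\mathfrak{g}]=0$ and $G$ is rationally abelian.

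The main obstacle is the input from nilpotent (co)homology in the last paragraph rather than any delicate model computation: I need the identification of $H^*(G,\Q)$ with the Chevalley--Eilenberg cohomology of the Malcev Lie algebra, so that the generation consequence of Theorem \ref{B} can be read off, together with the nonvanishing of $H^n$ (via unimodularity and Poincar\'e duality) and the formula $\dim H^1=n-\dim[\mathfrak{g},\mathfrak{g}]$. Once these standard facts are secured, the contradiction is immediate, and the entire substance of the argument is the top-degree dimension count.
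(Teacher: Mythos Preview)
Your proof is correct and follows essentially the same route as the paper. Both arguments invoke Theorem~\ref{B}\eqref{motz} with $k=\infty$ to get that $H^*(G)$ is generated in degree~$1$, and then finish with a top-degree dimension count: the paper phrases this as ``$\dim H^p(\mathcal{M})=1$ for the $p$-dimensional nilmanifold, while $H^*(\mathcal{M})$ is a quotient of $\wedge^*(x_1,\dots,x_n)$, hence $p=n$'', whereas you unpack the same fact via Nomizu's theorem, unimodularity, and Poincar\'e duality for $\mathfrak{g}$ to compare $\dim H^1$ with $n=\dim\mathfrak{g}$. The substance is identical; your version just makes the Lie-algebraic input explicit.
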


\begin{proof}
To justify the less obvious implication, assume the minimal model of the 
finitely generated nilpotent group $G$ is 
$\mathcal{M}= \wedge (x_1, \dots, x_n, y_{n+1}, \dots, y_p)$, with $C^1=<x_1, \dots, x_n>$. 
One knows that $\dim H^p(\mathcal{M})=1$, since $\mathcal{M}$ has the same cohomology 
as a $p$-dimensional nilmanifold. On the other hand, from Theorem \ref{B}, Part \eqref{motz}, 
for $k= \infty$, we get that $H^*(\mathcal{M})$ is a quotient of the algebra $\wedge^*(x_1, \dots, x_n)$, 
hence $p=n$, which proves our claim.  
\end{proof}

We will also need the following lemma:
 
\begin{lemma}
\label{sem}
Let $(A^*,d_A)$ be a D.G. algebra. Any $k$-minimal model 
$\mathcal{M}_k=( \wedge V^{\leq k},d) \overset{\phi_k}{\longrightarrow} (A^*,d_A)$ 
can be extended to a $(k+1)$-minimal model 
$\mathcal{M}_{k+1}=( \wedge V^{\leq k+1},d) \overset{ \phi_{k+1}}\longrightarrow (A^*,d_A)$ 
such that for any $v \in V^{k+1}, \; d(v) \in d(\mathcal{M}_k)$ if and only if $d(v)=0$. 
Moreover, $V^{k+1}= \oplus_{i \geq 0}V^{k+1}_i$ and $d(v)=0$ if and only if $v \in V_0^{k+1}$.
\end{lemma}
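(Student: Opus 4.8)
The plan is to obtain $\mathcal{M}_{k+1}$ from $\mathcal{M}_k$ by a Hirsch extension in degree $k+1$ and then to \emph{normalize} the new generators by a triangular change of basis so that the stated dichotomy holds. First I would invoke the existence assertion in Definition \ref{k-mod}: a $(k+1)$-minimal model exists, and concretely it is built from $\mathcal{M}_k$ by adjoining generators in degree $k+1$ of two kinds — closed generators, chosen so that $H^{k+1}(\phi_{k+1})$ becomes surjective (it is already injective, $\mathcal{M}_k$ being a $k$-minimal model), and non-closed generators $v$ whose differentials $d(v)$ are cocycles of degree $k+2$ representing a basis of $\Ker H^{k+2}(\phi_{k+1})$, chosen so that $H^{k+2}(\phi_{k+1})$ becomes injective. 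I fix one such extension $(\wedge V^{\leq k+1},d)\overset{\phi_{k+1}}{\longrightarrow} A^*$.

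The key observation is a degree count: since $\mathcal{M}_k=\wedge V^{\leq k}$ is generated in degrees $\leq k$, every element of $\mathcal{M}_k$ of degree $k+1$ (and of degree $k+2$) is decomposable. Hence if a new generator $v\in V^{k+1}$ happens to satisfy $d(v)\in d(\mathcal{M}_k)$, I may choose $\eta\in\mathcal{M}_k^{k+1}$ with $d(\eta)=d(v)$, and such an $\eta$ is automatically decomposable. Replacing $v$ by $v':=v-\eta$ is therefore an admissible change of the minimal generating system of $\wedge V^{\leq k+1}$: it alters the indecomposable $v$ only by a decomposable, hence is a filtered algebra automorphism fixing $V^{\leq k}$. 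After the change $d(v')=d(v)-d(\eta)=0$, and I extend the comparison map by $\phi_{k+1}(v'):=\phi_{k+1}(v)-\phi_k(\eta)$, which is again a D.G.A. map because $\phi_{k+1}(v')$ is a cocycle: $d\,\phi_{k+1}(v')=\phi_k(d(v))-\phi_k(d(\eta))=0$ since $d(v)=d(\eta)$. Carrying out this substitution for every generator whose differential lies in $d(\mathcal{M}_k)$ yields an extension in which, for each $v\in V^{k+1}$, one has $d(v)\in d(\mathcal{M}_k)$ if and only if $d(v)=0$.

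For the bigrading I would set $V_0^{k+1}:=V^{k+1}\cap\Ker d$, the closed generators produced above, so that $d(v)=0$ if and only if $v\in V_0^{k+1}$ holds by construction. Each remaining, non-closed generator carries a lower degree inherited from the bigraded structure of $\mathcal{M}_k$: writing $d(v)$ as a cocycle in $\mathcal{M}_k^{k+2}$ homogeneous of lower degree $i-1$ places $v$ in $V_i^{k+1}$, the differential lowering the lower degree by one. This produces the decomposition $V^{k+1}=\oplus_{i\geq 0}V_i^{k+1}$ with the required characterization of $V_0^{k+1}$.

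I expect the main obstacle to be the normalization step rather than the mere existence of the extension: one must verify that subtracting the decomposable correction $\eta$ neither destroys minimality nor spoils the defining cohomological properties (isomorphism up to degree $k+1$, monomorphism in degree $k+2$) of the $(k+1)$-minimal model, and that the comparison map $\phi_{k+1}$ can be repaired at the same time. Both points are controlled by the decomposability of $\eta$, guaranteed by the inequality $k+1>k$, together with the cocycle computation above. A secondary technical point is to arrange the differentials of the non-closed generators to be lower-degree homogeneous, so that the splitting into the $V_i^{k+1}$ is consistent; this is exactly where the bigrading already carried by $\mathcal{M}_k$ is used.
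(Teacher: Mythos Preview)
Your approach differs from the paper's. The paper builds $V^{k+1}$ inductively: $V_0^{k+1}:=\coker H^{k+1}(\phi_k)$ with zero differential, and $V_{i+1}^{k+1}:=\Ker H^{k+2}(\phi_{ki})$ with transgression $[d]$ the \emph{inclusion} into $H^{k+2}\bigl(\wedge(V^{\le k}\oplus V_{\le i}^{k+1})\bigr)$. The injectivity of each $[d]$ then yields the dichotomy by a downward induction on the top index occurring in $v=\sum_i v_i$. You instead start from an arbitrary $(k+1)$-minimal model and attempt to normalize afterwards.

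There is a genuine gap in the normalization. Replacing each \emph{basis} generator $v$ with $d(v)\in d(\mathcal{M}_k)$ by $v-\eta$ guarantees only that every surviving basis vector has either $d=0$ or $d\notin d(\mathcal{M}_k)$; it does \emph{not} give the conclusion for all $v\in V^{k+1}$. Two non-closed basis vectors $w_1,w_2$ may each have $d(w_j)\notin d(\mathcal{M}_k)$ while $d(w_1+w_2)\in d(\mathcal{M}_k)\setminus\{0\}$. For a concrete instance, take $k=1$, $\mathcal{M}_1=\wedge(a_1,a_2,b_1,b_2,c)$ with $dc=a_1b_1+a_2b_2$, and $d(w_1)=a_1a_2b_1+a_1b_2c$, $d(w_2)=-a_1b_2c$: neither lies in $d(\mathcal{M}_1)\subset\wedge^3(a_i,b_i)$, yet $d(w_1+w_2)=a_1a_2b_1=d(a_2c)$. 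To repair the argument you must treat the whole subspace $W:=\{v\in V^{k+1}:d(v)\in d(\mathcal{M}_k)\}$ at once: choose a linear lift $\eta\colon W\to\mathcal{M}_k^{k+1}$ with $d\circ\eta=d|_W$, replace $W$ by the closed subspace $\{w-\eta(w):w\in W\}$, and take the remaining generators in a complement to $W$ inside the original $V^{k+1}$.

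A secondary point: there is no ``bigraded structure of $\mathcal{M}_k$'' to inherit. The Halperin--Stasheff bigrading lives on the minimal model of a graded algebra $(H^*,0)$, not on the $k$-minimal model of an arbitrary D.G.A. $(A^*,d_A)$. In the lemma the lower grading $V^{k+1}=\oplus_{i\ge 0}V_i^{k+1}$ is \emph{defined} by the inductive construction, not restricted from a pre-existing one; with your approach you can legitimately set $V_0^{k+1}:=\Ker(d|_{V^{k+1}})$ and take the remaining $V_i^{k+1}$ to be any direct-sum complement, which is all that the subsequent application (Proposition~\ref{k+2}) actually uses.
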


\begin{proof}
 Following a standard technique we gradually define the vector space of 
 degree $k+1$ generators $V^{k+1}$.
 
 Set $V_0^{k+1}:= \coker H^{k+1}(\phi_k)$, with $d|_{V_0^{k+1}}=0$. Assume we have already  
 defined $V_i^{k+1}$ such that there is an extension 
 $\phi_{ki}: \wedge (V^{\leq k} \oplus V^{k+1}_{\leq i}) \rightarrow (A^*, d_A)$ of the morphism 
 $\phi_k$ and set $V^{k+1}_{i+1}:= \Ker H^{k+2}(\phi_{ki})$ with transgression given by the inclusion  
 $[d]:V^{k+1}_{i+1} \hookrightarrow H^{k+2}(\wedge (V^{\leq k} \oplus V^{k+1}_{\leq i}))$.
 Define $V^{k+1}= \oplus_{i \geq 0} V_i^{k+1}$.
 
 It remains to see that the extension just defined satisfies the property claimed in the lemma.
 Let $v= \sum_{i=0}^{n}v_i \in V^{k+1}, \; v_i \in V^{k+1}_i$. If $d(v)=d(\alpha)$, 
 for some $\alpha \in \mathcal{M}_k$, then 
 $d(v_n)=d(\alpha - \sum_{i=1}^{n-1}v_i) \in d( \wedge (V^{\leq k} \oplus V_{<n}^{ k+1}))$. 
 This forces $v_n=0$, if $n>0$. A downward induction on $n$ shows $v=v_0$.
\end{proof}

\begin{prop}
\label{k+2} 
A k-formal space  $M$ with $H^{\geq k+2}(M)=0$ is formal.
\end{prop}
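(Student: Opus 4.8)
The plan is to verify directly that the full minimal model $\mathcal{M}=\mathcal{M}(M)=(\wedge V,d)$ satisfies the criterion of Proposition \ref{art}\eqref{i} in the case $k=\infty$; since $\mathcal{M}_\infty(M)=\mathcal{M}(M)$ and $\infty$-formality is exactly formality (Definitions \ref{DGA k-formal} and \ref{s-formality}), this proves the proposition. Concretely, I must exhibit a decomposition $V=C\oplus N$ with $C=\Ker(d|_V)$ and $N=\oplus_{i\ge 1}N^i$ such that $(N\cdot\mathcal{M}\cap\Ker d)^q\subseteq d\mathcal{M}$ in \emph{every} degree $q$.

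First I would assemble the decomposition. In degrees $\le k$, I take the decomposition $V^{\le k}=C^{\le k}\oplus N^{\le k}$ supplied by the $k$-formality of $M$ through Proposition \ref{art}\eqref{i}. Then I extend $\mathcal{M}_k$ to degree $k+1$ via Lemma \ref{sem}, which yields $V^{k+1}=\oplus_{i\ge 0}V^{k+1}_i$; I set $C^{k+1}:=V^{k+1}_0=\Ker(d|_{V^{k+1}})$ and $N^{k+1}:=\oplus_{i\ge 1}V^{k+1}_i$, so that $d$ is injective on $N^{k+1}$ and, by the lemma, $dn\in d\mathcal{M}_k$ holds for $n\in N^{k+1}$ if and only if $dn=0$. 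In degrees $\ge k+2$ I put $C^j=\Ker(d|_{V^j})$ and pick any complement $N^j$; these choices will turn out to be irrelevant.

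Next I would check the criterion degree by degree. The range $q\ge k+2$ is immediate: since $H^{\ge k+2}(M)=0$ and $\mathcal{M}$ computes $H^*(M)$, every cocycle of degree $\ge k+2$ is a boundary, so $(\Ker d)^q=(d\mathcal{M})^q$ and a fortiori $(N\cdot\mathcal{M}\cap\Ker d)^q\subseteq d\mathcal{M}$. For $q\le k+1$ I reduce to the $k$-formality hypothesis, using that any element of $\mathcal{M}$ of degree $\le k$ already lies in $\mathcal{M}_k=\wedge V^{\le k}$. A degree-$(k+1)$ element of the ideal $N\cdot\mathcal{M}$ decomposes as $y+n$ with $y\in(N^{\le k}\cdot\mathcal{M}_k)^{k+1}$ and $n\in N^{k+1}$ (the contributions of $N^{\ge k+2}$ being too high in degree). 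If this element is closed, then $dn=-dy\in d\mathcal{M}_k$, so Lemma \ref{sem} forces $dn=0$, and injectivity of $d$ on $N^{k+1}$ gives $n=0$. The element is therefore $y\in(N^{\le k}\cdot\mathcal{M}_k\cap\Ker d)^{k+1}$, which lies in $d\mathcal{M}_k\subseteq d\mathcal{M}$ by the $k$-formal decomposition property; the same (simpler) reasoning covers $q\le k$. With the criterion verified in all degrees, Proposition \ref{art}\eqref{i} for $k=\infty$ gives that $\mathcal{M}$ is formal.

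The delicate point, and the only place where genuine work is needed, is the case $q=k+1$: one must ensure that the freshly adjoined non-closed generators $N^{k+1}$ produce no new closed elements in the ideal $N\cdot\mathcal{M}$ beyond those already present in $\mathcal{M}_k$. This is precisely what Lemma \ref{sem} is designed to control, through the equivalence $dn\in d\mathcal{M}_k\Leftrightarrow dn=0$, so the argument stands or falls on invoking that lemma correctly. Once one is past degree $k+1$, the hypothesis $H^{\ge k+2}(M)=0$ makes every cocycle exact and trivializes the remaining verifications.
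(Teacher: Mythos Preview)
Your proof is correct and follows essentially the same route as the paper: build the decomposition required by Proposition \ref{art}\eqref{i} for $k=\infty$ from the $k$-formal decomposition in degrees $\le k$, the extension of Lemma \ref{sem} in degree $k+1$, and the vanishing $H^{\ge k+2}(M)=0$ thereafter, with the degree-$(k+1)$ case handled identically via Lemma \ref{sem}. The only cosmetic difference is that the paper additionally observes $C^{\ge k+2}=0$ (by minimality of $d$ together with $H^{\ge k+2}=0$), while you simply note that the choice of $C^j,N^j$ for $j\ge k+2$ is irrelevant; both are valid.
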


\begin{proof} 

We use a result of  \cite{DGMS} to deduce the formality of $M$, corresponding to 
the case $k=\infty$ from our Proposition \ref{art}, Part \eqref{i}. That is, we have 
to find a minimal model $\mathcal{M}=(\wedge V,d)$ that admits a decomposition 
$V^i=C^i \oplus N^i$ for any $i$, such that $C^i= \Ker(d|_{V^i})$ and any closed element 
in the ideal generated by $\oplus N^i$, denoted  $I(\oplus_{i \geq 1} N^i)$, is exact in $\mathcal{M}$.

The $k$-formality is characterized on the other hand by Proposition \ref{art}, Part \eqref{i}, 
hence we choose $\mathcal{M}_k= (\wedge V^{\leq k}, d)$ a $k$-minimal model with a decomposition  
$V^i=C^i \oplus N^i$ for $i \leq k$ such that any closed element of degree at most $k+1$ in 
$N^{\leq k} \cdot  \mathcal{M}_k$ is exact in $\mathcal{M}_k$. 
Extend $\mathcal{M}_k$ to a minimal model $\mathcal{M}$, such that the $\mathcal{M}_{k+1}$ 
extension is constructed as in Lemma \ref{sem}.

For $i \geq k+2$ take $\alpha \in V^i, \; d(\alpha)=0$. Since $H^{\geq k+2}(\mathcal{M})=0$, 
there is $z \in \mathcal{M}$, such that $\alpha=d(z)$. 
Due to the decomposability of the differential 
of the minimal algebra $\mathcal{M}$, $\alpha=0$. Therefore 
$C^{\geq k+2}=0$ and we may take $N^{\geq k+2}=V^{\geq k+2}$.

For $i=k+1$, we know from Lemma \ref{sem} that $C^{k+1}= V_0^{k+1}$. 
Take  $N^{k+1} = \oplus_{i>0}V^{k+1}_i$.

It remains to see that every closed homogeneous element $\alpha \in I(\oplus_{i \geq 1} N^i)$, 
is exact in $\mathcal{M}$.
If $\alpha$ has degree $\geq k+2$, this is clear, since $H^{\geq k+2}(\mathcal{M})=0$. 
If $\alpha$ is of degree $\leq k$, then $\alpha \in N^{\leq k} \cdot \mathcal{M}_k$ and 
$\alpha$ must be exact in $\mathcal{M}_k$, hence in $\mathcal{M}$, 
by Proposition \ref{art}, Part \eqref{i}.

The last case to be solved is when $\alpha$ is homogeneous of degree $k+1$.  In this case, 
$\alpha=\alpha_1+\alpha_2$, with $\alpha_1 \in (N^{\leq k} \cdot \mathcal{M}_k)^{k+1}$ and 
$\alpha_2 \in N^{k+1}$.  Then $d(\alpha)=0$ implies $d(\alpha_2)=d(-\alpha_1)$, hence 
$\alpha_2=0$, by Lemma \ref{sem}. The exactness of $\alpha= \alpha_1$ follows again 
by Proposition \ref{art} \eqref{i}. 
\end{proof}

We remark that \cite[Lemma 2.10]{FM} is a consequence of the above result.

\begin{corollary}
\label{new}
Complex plane projective curve complements are formal spaces.
\end{corollary}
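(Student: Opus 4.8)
The plan is to exhibit a plane projective curve complement as an instance of Proposition \ref{k+2} with $k=1$; the two things to verify are that such a complement is $1$-formal and that its cohomology vanishes in degrees $\ge 3$. Write $M=\mathbb{CP}^2\setminus C$ with $C$ a reduced algebraic curve of degree $d$. First I would record that $M$ is a smooth affine surface: since $C$ is a hypersurface, the $d$-uple Veronese embedding $\mathbb{CP}^2\hookrightarrow\mathbb{CP}^N$ carries $C$ onto a hyperplane section, so $M$ is isomorphic to a closed subvariety of $\mathbb{CP}^N\setminus H\cong\mathbb{C}^N$, hence affine; smoothness is automatic, as $M$ is open in $\mathbb{CP}^2$.

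From affineness I would extract the cohomological vanishing. By the Andreotti--Frankel theorem, a smooth affine variety of complex dimension $2$ has the homotopy type of a CW-complex of real dimension at most $2$, so $H^{\ge 3}(M)=0$. In the notation of Proposition \ref{k+2} this is exactly $H^{\ge k+2}(M)=0$ for $k=1$.

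It remains to see that $M$ is $1$-formal, and this is the heart of the matter. The cleanest route is Hodge-theoretic: by Morgan's theory of the mixed Hodge structure on the minimal model of a smooth variety (\cite{M}), the weight filtration on $H^1(M)$ has bottom piece $W_1H^1(M)=\im\bigl(H^1(\mathbb{CP}^2)\to H^1(M)\bigr)=0$, so $H^1(M)$ is pure of weight $2$; purity of $H^1$ forces the Malcev Lie algebra of $\pi_1(M)$ to be quadratically presented, equivalently $M$ to be $1$-formal (see \cite{ABC} for this equivalence). Granting this, $M$ is a $1$-formal space with $H^{\ge 3}(M)=0$, so Proposition \ref{k+2} applies with $k=1$ and yields that $M$ is formal.

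I expect the $1$-formality step to be the only genuine obstacle; the affineness and the dimension bound are robust and hold for any reduced $C$, irrespective of its singularities or its number of irreducible components, and the weight-purity input uses solely $H^1(\mathbb{CP}^2)=0$. A reader wanting a self-contained argument would still have to justify the passage from purity of $H^1$ to $1$-formality, which is precisely where the mixed-Hodge machinery is used in an essential way.
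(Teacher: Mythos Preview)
Your proof is correct and follows the same strategy as the paper's: verify that the complement is $1$-formal with $H^{\ge 3}(M)=0$, then invoke Proposition~\ref{k+2} with $k=1$. The paper simply cites \cite{K} for $1$-formality and \cite{DP} for the CW-dimension bound, whereas you derive these from Morgan's weight-purity criterion and Andreotti--Frankel respectively; both justifications are standard and valid.
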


\begin{proof}
One knows that the complements of plane projective curves are 1-formal spaces, 
having the homotopy type of a CW-complex with cells of dimension $\leq 2$; see 
\cite{K} and \cite{DP} respectively, for more details. Hence their cohomology is trivial 
in dimension $\geq 3$ and we  can apply Proposition \ref{k+2}.
\noindent \end{proof}

The same result was proved in \cite{CM}, using a different approach.

\section{Partial formality and resonance}
\label{section:partial formality}
 
\begin{definition}
\label{var-res}
Let  $M$ be a (finite type) connected CW complex. Define the resonance variety  
$\mathcal{R}^{q}_{k}(M)$ as the subset (homogeneous subvariety) of all cohomology classes 
$w \in H^{1}(M, \k)$ such that  
$\dim_{\k} H^{q}(H^*(M), \mu_{w}) \geq k$, where $\mu_{w}$ is 
the differential induced by the multiplication by $w$. 
\end{definition}

To obtain another type of obstruction to formality, related to resonance varieties, 
we begin with a lemma on bigraded minimal models.

\begin{lemma}
\label{bigrad}
Let $H^*$ be a connected  graded-commutative algebra and denote by $\mathcal{B}$ its 
bigraded minimal model as defined and constructed in \cite{HS}.  If 
$\mathcal{R}_1^q(H^*) \nsubseteq \{0\}$, 
then the vector space $\mathcal{B}^q$ has infinite dimension.
\end{lemma}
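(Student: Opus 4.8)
The plan is to exploit the acyclicity already built into the bigraded model, namely $H_+(\mathcal{B},d)=0$ from \eqref{basic}, together with a \emph{twisted} differential obtained by adding multiplication by a resonant class. Choose $w\in\RR_1^q(H^*)$ with $w\neq 0$; by definition there is $a\in H^q$ with $w\,a=0$ but $a\notin w\,H^{q-1}$. Since $\mathcal{B}_0=\wedge Z_0$ carries the zero differential and $H^*\cong H_0(\mathcal{B},d)$, I lift $w$ and $a$ to lower-degree-$0$ cocycles $\widetilde w\in\mathcal{B}^1_0=Z_0^1$ and $\widetilde a\in\mathcal{B}^q_0$; note that $\widetilde w\neq 0$ is a genuine generator, so it extends to a basis of $Z$. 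Writing $\mu_{\widetilde w}$ for multiplication by $\widetilde w$, one checks $\mu_{\widetilde w}^2=0$ (char.\ $0$, $\widetilde w$ odd) and $d\mu_{\widetilde w}+\mu_{\widetilde w}d=0$ (as $d\widetilde w=0$), so $D:=d+\mu_{\widetilde w}$ is a differential raising the upper degree by one. I argue by contradiction: assuming $\dim_{\k}\mathcal{B}^q<\infty$, I will manufacture a $D$-cocycle whose exactness forces $a\in w\,H^{q-1}$.

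Next I build a chain. Set $\xi_0=\widetilde a$. The cocycle $\widetilde w\,\xi_0$ lies in $\mathcal{B}^{q+1}_0$ and represents $w\,a=0$, so $\widetilde w\,\xi_0\in d(\mathcal{B}^q_1)$ and I pick $\xi_1\in\mathcal{B}^q_1$ with $d\xi_1=-\widetilde w\,\xi_0$. For $i\ge 2$, given $\xi_{i-1}\in\mathcal{B}^q_{i-1}$, the element $\widetilde w\,\xi_{i-1}$ is a $d$-cocycle (since $d(\widetilde w\,\xi_{i-1})=-\widetilde w\,d\xi_{i-1}=\widetilde w^2\xi_{i-2}=0$) of lower degree $i-1\ge 1$, hence exact by \eqref{basic}, and I take a bihomogeneous primitive $\xi_i\in\mathcal{B}^q_i$ with $d\xi_i=-\widetilde w\,\xi_{i-1}$. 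If $\dim_{\k}\mathcal{B}^q<\infty$, choose $M$ with $\mathcal{B}^q_i=0$ for $i>M$. The construction then runs through $i=M$, and at the next stage the primitive of the exact cocycle $\widetilde w\,\xi_M$ must lie in $\mathcal{B}^q_{M+1}=0$, forcing $\widetilde w\,\xi_M=0$. Thus $\Xi:=\sum_{i=0}^{M}\xi_i$ is an honest element of $\mathcal{B}^q$ with $D\Xi=\sum_i(d\xi_i+\widetilde w\,\xi_i)=\widetilde w\,\xi_M=0$ by telescoping, a $D$-cocycle whose lower-degree-$0$ component is $\widetilde a$.

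The crux, and the step I expect to be the main obstacle, is the acyclicity $H^*(\mathcal{B},D)=0$. Since $\widetilde w$ is a basis vector of $Z$, write $\mathcal{B}=\wedge\langle\widetilde w\rangle\otimes\wedge Z'$; then $\mu_{\widetilde w}$ is the acyclic Koszul differential and admits a lower-degree-preserving contracting homotopy $h$ with $\mu_{\widetilde w}h+h\mu_{\widetilde w}=\id$ and $h^2=0$. I then perturb by $d$: because $d$ lowers the lower degree by exactly one and lower degrees are $\ge 0$, the operator $dh$ is locally nilpotent, so $H:=h\sum_{n\ge 0}(dh)^n$ is a well-defined operator and the basic perturbation lemma gives $DH+HD=\id$, whence $(\mathcal{B},D)$ is acyclic. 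This local-nilpotency/convergence point is the only delicate part; the rest is bookkeeping with the bigrading.

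Finally I extract the contradiction. Acyclicity applied to $\Xi$ gives $\eta\in\mathcal{B}^{q-1}$ with $\Xi=D\eta$. Decomposing $\eta=\sum_j\eta_j$ by lower degree and comparing lower-degree-$0$ components (using that $d$ drops, and $\mu_{\widetilde w}$ preserves, the lower degree) yields $\widetilde a=d\eta_1+\widetilde w\,\eta_0$ in $\mathcal{B}^q_0$, where $\eta_0\in\mathcal{B}^{q-1}_0$ is automatically a $d$-cocycle. Passing to $H^q=H^q_0(\mathcal{B},d)$, the term $d\eta_1$ is a coboundary while $\widetilde w\,\eta_0$ represents $w\,[\eta_0]$, so $a=w\,[\eta_0]\in w\,H^{q-1}$, contradicting $w\in\RR_1^q(H^*)$. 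Hence $\dim_{\k}\mathcal{B}^q=\infty$; equivalently, the elements $\xi_i\in\mathcal{B}^q_i$ cannot all vanish, and lying in distinct lower degrees they are linearly independent.
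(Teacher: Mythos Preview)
Your argument is correct. Both proofs build the same inductive chain $\xi_i\in\mathcal{B}^q_i$ with $d\xi_i=-\widetilde w\,\xi_{i-1}$ (the paper's $\alpha_i$, up to signs), using $H_+(\mathcal{B},d)=0$ at each step. The difference lies in how the contradiction is extracted once the chain is assumed to terminate. The paper argues directly and elementarily: if $\omega\alpha_n=0$ then $\alpha_n=\omega\beta$, and differentiating repeatedly walks down the lower degree until one reaches $\eta\pm d\zeta_1=\omega\zeta_0$, forcing $[\eta]\in[\omega]H^{q-1}$. You instead introduce the twisted differential $D=d+\mu_{\widetilde w}$, package the chain into a single $D$-cocycle $\Xi$, and appeal to acyclicity of $(\mathcal{B},D)$ via the basic perturbation lemma to produce $\eta$ with $\Xi=D\eta$; reading off the lower-degree-$0$ component then gives the same contradiction.

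These are really the same argument in two dresses: the paper's explicit downward induction \emph{is} the hands-on proof that every $D$-cocycle in $\mathcal{B}^q$ is $D$-exact (write $\Xi=\sum_{j\le N}\Xi_j$, use $\widetilde w\Xi_N=0$ to start $\Xi_N=\widetilde w\gamma_N$, then descend). Your repackaging is more conceptual and makes transparent why the construction works---it is Koszul acyclicity of $\mu_{\widetilde w}$ perturbed by the lower-degree-decreasing $d$---but it imports heavier machinery (the perturbation lemma and its convergence check) where a two-line descent suffices. One small point: your closing remark that ``the $\xi_i$ cannot all vanish'' is not quite what you proved; the contradiction argument shows $\dim\mathcal{B}^q=\infty$, but some individual $\xi_i$ could in principle be zero. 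This is harmless since the main argument is complete without it.
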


\begin{proof}

We evaluate first the resonance varieties of $H^* \cong H^*(\mathcal{B})$.

 Take $[\omega] \in H^1(\mathcal{B})$  and $[\eta] \in H^q (\mathcal{B})$ with 
 $d(\omega)=0, \;d(\eta)=0$ and $[\omega \eta]=0$ in $H^{q+1}(\mathcal{B})$. Note that
 $\omega \in Z_0^1$: write $ \omega= \sum _{i \geq 0}\omega_i, \; \omega_i \in Z^1_i$, 
 with almost all $\omega_i=0$. Now $d(\omega)=0$ implies $d(\omega_i)=0,\; \forall i$; 
 moreover, for $i>0$, all $\omega_i$ are boundaries, as follows from \eqref{basic}, 
 so $\omega_i=0$, by minimality.
 The same type of argument  shows that we can take $\eta \in \mathcal{B}_0^q$.

The equality $[\omega \eta]=0$ in $H^{q+1}$  translates into $\omega \eta =d(\alpha)$ 
in $\mathcal{B}^{q+1}$, for some $\alpha \in \mathcal{B}^{q}_1$. Now suppose $\omega \neq 0$ 
and $[\eta] \notin [\omega]H^{q-1}(\mathcal{B})$.

If $\alpha =0$, then $\omega \eta=0$ in the free graded commutative algebra $\mathcal{B}$, 
hence $\eta = \overline{\eta} \omega$ with $\overline{\eta} $  of lower degree 0, 
hence $d(\overline{\eta})=0$. But this implies $[\eta]= \pm [\omega][\overline{\eta}]$, a contradiction.

So,  $\alpha:= \alpha_1 \neq 0$. We construct a sequence 
$(\alpha_i)_{i \geq0}, \;\alpha_i \in \mathcal{B}^q_i, \; \alpha_0:=\eta$, such that 
$d(\alpha_{i+1})=\omega \alpha_{i}$ and $\alpha_i \neq 0 \;\forall i$. Assume $\alpha_n$ 
already defined and satisfying the required properties for some fixed $n \geq 1$. Then 
$d( \omega \alpha_n)=0$, so there is $\alpha_{n+1} \in \mathcal{B}^q_{n+1}$ such that 
$d(\alpha_{n+1})= \omega \alpha_{n}$. Also $\alpha_{n+1}=0$ implies $\omega \alpha_{n}=0$. 

The last equality will lead to a contradiction.
 If $\omega \alpha_{n}=0$, then $\alpha_n= \omega \beta, \; \beta \in \mathcal{B}^{q-1}_n$. 
 Differentiating the last equality we obtain $\omega \alpha_{n-1} = \pm \omega d(\beta)$, 
 hence $\alpha_{n-1} \pm d(\beta)= \omega \zeta_{n-1}$, with  $\zeta_{n-1} \in \mathcal{B}_{n-1}^{q-1}$.
 Differentiating again we get $\omega \alpha_{n-2} = \pm \omega d(\zeta_{n-1})$, which implies 
 $\alpha_{n-2} \pm d(\zeta_{n-1})= \omega \zeta_{n-2}$ with $\zeta_{n-2} \in \mathcal{B}_{n-2}^{q-1}$. 
 A downward induction shows that eventually we obtain the equality 
 $\eta \pm d(\zeta_{1})= \omega \zeta_{0}$, with $\zeta_{1} \in \mathcal{B}_{1}^{q-1}$ and 
 $\zeta_{0} \in \mathcal{B}_{0}^{q-1}$, which implies $[\eta]=[\omega][\zeta_0]$, again a contradiction.
 
  This ends the construction of the sequence $(\alpha_i)_{i \geq0}, \;\alpha_i \in \mathcal{B}^q_i$ , 
  with nonzero elements $\alpha_i$ and the conclusion follows.
\end{proof}

{\bf Proof of Theorem \ref{zerores}}
Let us denote by $\mathcal{B}$ the bigraded minimal model of the cohomology algebra 
$H^*(G)$, and by $_{s}\mathcal{B}$ the $s$-minimal model of $H^*(G)$.
According to the previous lemma, it is enough to show that $\mathcal{B}^q$ 
has finite dimension, for $q \leq s$. The $s$-formality of $G$ implies that the $s$-minimal model 
of $G$ coincides with the  $s$-minimal model of $H^*(G)$, that is $\mathcal{M}_s(G) =\;_{s}\mathcal{B}$. 
Since $G$ is finitely generated and nilpotent, $\mathcal{M}(G)=\mathcal{M}_1(G)=\mathcal{M}_s(G)$ 
is a finite dimensional vector space. Clearly, $\mathcal{B}^q= _s\mathcal{B}^q$, for $q \leq s$. 
Our proof is complete. \hfill $\square$

\begin{example}
\label{arr}
The nilpotency condition in Theorem \ref{zerores} is necessary. We consider 
$G_{\mathcal{A}}=\pi_1(M_{\mathcal{A}})$ to be the fundamental group of 
the complement of a central complex hyperplane arrangement 
$\mathcal{A} \subset \mathbb{C}^n$, $n \geq 3$.
Then $G_{\mathcal{A}}$ is finitely generated (see for instance \cite{OT}) and 1-formal 
($M_{\mathcal{A}}$ is a formal space -see for example \cite{OT}- hence 1-formal, 
so $G_{\mathcal{A}}$ is also 1-formal). 

 In this setting, the properties below are equivalent:
 \begin{enumerate}
 \item \label{e1}
  The hyperplanes of $\mathcal{A}$ are in general position in codimension 2.
 \item \label{e2}
  The group $G_{\mathcal{A}}$ is abelian.
 \item \label{e3}
  The group $G_{\mathcal{A}}$ is nilpotent.
 \item \label{e4}
 $\dim_{\mathbb{Q}} \gr(G_{\mathcal{A}}) \otimes \mathbb{Q} < \infty$.
 \item \label{e5}
  $\mathcal{R}_1^1(G_{\mathcal{A}}) \subseteq \{0\}$.
 \item \label{e6}
  $\mathcal{V}_1^1(G_{\mathcal{A}}) \subseteq \{1\}$. 
 \end{enumerate}
 (Here $\gr(G) \otimes \mathbb{Q}$ is the rational associated graded Lie algebra of
 a group $G$, and $\mathcal{V}_1^1(G)$ denotes its (first) characteristic variety in
 degree one.)
 
The implication \eqref{e1} $\Rightarrow$ \eqref{e2} follows from Hattori's Theorem 
from \cite{Hat}, and 
\eqref{e2} $\Rightarrow$ \eqref{e3}  $\Rightarrow$ \eqref{e4} are obvious. 
For \eqref{e4} $\Rightarrow$ \eqref{e1}, we refer to \cite[Proposition 2.12]{F}. The implication 
\eqref{e3} $\Rightarrow$ \eqref{e5} is given by our Theorem \ref{zerores} and 
\eqref{e5} $\Rightarrow$ \eqref{e1} is implicit in the proof of Proposition 2.12, 
from \cite{F}. \eqref{e3} $\Rightarrow$ \eqref{e6} follows from \cite[Theorem 1.1]{MP}. 
Finally, \eqref{e6} $\Rightarrow$ \eqref{e5}, since in the 1-formal case 
(which is the case for $G_{\mathcal{A}}$) we have a local isomorphism  
$\mathcal{R}_1^1 \cong \mathcal{V}_1^1$ given by the exponential map; see \cite[Theorem A]{DPS}.
\end{example}

\begin{remark}
The partial formality in the hypothesis of Theorem \ref{zerores} is also
essential.  The more precise claim is that for any finite connected CW-complex $M$, 
there is a finitely generated 2-step nilpotent group $G$ such that 
$\mathcal{R}_k^1(G)=\mathcal{R}_k^1(M)$, for all $k$. See \cite{MP}.
Now take $M=M_{\A}$, where the arrangement $\A$ is not in general position in 
codimension 2. Since $\mathcal{R}_*^1(G_{\A})=\mathcal{R}_*^1(M_{\A})$, by
Definition \ref{var-res}, it follows from Example \ref{arr} that $\RR_1^1(G)\not\subseteq \{ 0\}$. 

Notice that the case of resonance varieties of finitely generated nilpotent groups 
(even restricted to 2-step nilpotent groups) is very different 
from the one of characteristic varieties, 
described in \cite{MP}.
\end{remark}

Given a graded-commutative algebra $H^*$, let $K$ be the kernel of 
the multiplication map $\mu: H^1 \wedge H^1 \longrightarrow H^2$, called in
\cite{CT} the characteristic subspace of $H^*$.
 
 \begin{lemma}
 \label{non}
 The subspace $K$ contains  no nontrivial decomposables if and only if 
 \begin{equation}
 \label{incl}
 \{ \omega \in K | \omega^2=0 \in \wedge^4 H^1\} \subseteq \{0\}
 \end{equation}
 Both properties are equivalent to $\mathcal{R}_1^1(H^*) \subseteq \{0\}$.
 \end{lemma}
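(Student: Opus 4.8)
The plan is to prove the two asserted equivalences in turn, writing $V := H^1$ for brevity. The equivalence of the two displayed conditions rests on a classical fact about the exterior algebra of $V$: a bivector $\omega \in \wedge^2 V$ satisfies $\omega \wedge \omega = 0$ in $\wedge^4 V$ if and only if $\omega$ is decomposable, i.e. $\omega = 0$ or $\omega = a \wedge b$ for some $a, b \in V$. The passage to the resonance variety $\mathcal{R}_1^1(H^*)$ is then pure bookkeeping with Definition \ref{var-res}.

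First, the exterior-algebra fact. One direction is immediate: if $\omega = a \wedge b$, then $\omega \wedge \omega = a \wedge b \wedge a \wedge b = 0$ because of the repeated factor. For the converse, I would use the normal form of an alternating $2$-tensor: choose a basis of $V$ in which $\omega = \sum_{i=1}^{r} e_{2i-1} \wedge e_{2i}$ with $e_1, \dots, e_{2r}$ linearly independent, $2r$ being the rank of $\omega$. Then $\omega \wedge \omega = 2 \sum_{1 \leq i < j \leq r} e_{2i-1} \wedge e_{2i} \wedge e_{2j-1} \wedge e_{2j}$, whose summands are distinct members of the standard basis of $\wedge^4 V$; since $\ch \k = 0$, this vanishes precisely when there is no pair $i < j$, that is, when $r \leq 1$, i.e. $\omega$ decomposable. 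Applying this with $\omega$ ranging over $K$ shows that $\{\omega \in K \mid \omega^2 = 0\}$ is exactly the set of decomposable elements of $K$. This set equals $\{0\}$ if and only if $K$ contains no nonzero decomposable, which proves the first ``if and only if.''

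Next, the equivalence with resonance. For $w \in V$ the complex computing $H^*(H^*, \mu_w)$ begins $H^0 \to H^1 \to H^2$ with differential cup product with $w$. Connectivity gives $\im(\mu_w \colon H^0 \to H^1) = \k w$, while graded-commutativity together with $\ch \k = 0$ gives $w^2 = 0$, so $w \in \ker(\mu_w|_{H^1})$. Hence for $w \neq 0$ one has $\dim_{\k} H^1(H^*, \mu_w) = \dim_{\k} \ker(\mu_w|_{H^1}) - 1$, so $w \in \mathcal{R}_1^1(H^*)$ if and only if there exists $a \in H^1$ with $a \notin \k w$ and $wa = 0$. Such a pair $(w, a)$ is precisely a pair of linearly independent elements of $H^1$ whose product in $H^2$ vanishes, equivalently a nontrivial decomposable $a \wedge w$ lying in $K$. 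Therefore $\mathcal{R}_1^1(H^*) \not\subseteq \{0\}$ if and only if $K$ contains a nontrivial decomposable, which is the contrapositive of the remaining equivalence.

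The only genuinely nontrivial ingredient is the implication $\omega \wedge \omega = 0 \Rightarrow \omega$ decomposable; everything else is unwinding definitions. I expect the main obstacle to be justifying the bivector normal form cleanly (or, alternatively, deriving the statement from the Pl\"ucker relations for the Grassmannian of $2$-planes), since this is the one place where the argument is easy to mis-state.
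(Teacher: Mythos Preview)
Your proof is correct and follows essentially the same approach as the paper: both use the canonical form of a bivector to reduce $\omega^2=0$ to rank $\le 1$, and both unwind the definition of $\mathcal{R}_1^1$ to identify a nonzero resonance class with a nontrivial decomposable in $K$. Your treatment of the resonance equivalence is slightly more explicit (computing $\dim H^1(H^*,\mu_w)=\dim\ker(\mu_w|_{H^1})-1$), but the substance is the same.
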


 \begin{proof}
 Assume there is $0 \neq \omega = \sum_{i=1}^m x_i y_i$, written in canonical form, 
 satisfying $\omega^2=0$. Since 
 $\omega^i= i! \sum_{1 \leq k_1< \dots <k_i \leq m} x_{k_1}y_{k_1} \dots x_{k_i}y_{k_i} \neq 0$ 
 if $i \leq m$  we conclude that $m+1=2$, that is $\omega$ is a nontrivial decomposable. 
 
 The converse implication is immediate: a decomposable element $ \omega= \alpha_1 \alpha_2 \neq 0$ 
 satisfies $\omega^2=0$.
 
 It is equally easy to see that the fact that $K$ does not contain nontrivial decomposables 
 is equivalent to  $\mathcal{R}_1^1(H^*) \subseteq \{0\}$:

 Assume there is $\alpha \in \mathcal{R}_1^1(H^*),\; \alpha \neq 0$, hence there exists 
 $\beta \in H^1,\; \alpha \wedge \beta \neq 0$ such that $\alpha \beta =0 $ in $H^2$. 
 This implies $\alpha \wedge \beta$ is a nontrivial decomposable element of $K$.
 
 On the other hand, if for some $\alpha, \; \beta \in H^1$ we have 
 $0 \neq \alpha \wedge \beta \in K$, then $\alpha \beta =0$ in $H^2$, hence 
 $\mathcal{R}_1^1(H^*)$ contains nonzero elements.
 \end{proof}
 
 We see in the next Example that the triviality of the resonance varieties, up to a degree, 
 is not a sufficient condition for partial formality,
 even when the cohomology is partially generated in degree one.
 
\begin{example}
\label{contr}
Let $\mathcal{B}= \wedge(x_1, x_2, y_1, y_2, z, \omega_1, \omega_2, \alpha)$, 
$d(x_i)=d(y_i)=d(z)=0,\; d(\omega_1)=x_1y_1+x_2z,\;d(\omega_2)=x_2y_2+x_1z$,
$d(\alpha)=x_1\omega_1+x_2 \omega_2$, be the $1$-formal minimal model generated 
in degree $1$ from \cite{CT}, Example 2.8.

Then $\mathcal{B}$, in the notations from 
the beginning of Section \ref{obstr}, admits a bigrading  compatible with $d$,
with $Z_0=<x_i, y_i, z>,\; Z_1=<\omega_1, \omega_2>, \; Z_2=<\alpha>$.
One can check that $H_+^{\leq 2}(\mathcal{B})=0$.
Consider now a "deformation" of $\mathcal{B}, \;\mathcal{M}=\wedge(Z_0 \oplus Z_1 \oplus Z_2)$, 
with differential $D$ defined as follows: $D|_{Z_0,Z_1} = d|_{Z_0,Z_1}$ and 
$D(\alpha)=d(\alpha)+p, \; p \in \wedge^2Z_0$.  
A direct computation shows that  $D^2 = 0$.

Next we will prove that $H^{\leq 2}\mathcal{M} \cong H^{\leq 2}\mathcal{B}$, 
as algebras. As explained in \cite{S}, $\B$ is the minimal model of a finitely presentable
(3-step) nilpotent group $G_{\B}$.
Hence, via Proposition \ref{art}, Part \eqref{iii} and Theorem \ref{zerores}, 
applied for $M=K(G_{\B}, 1)$, one finds that $H^{\leq 2}\mathcal{M}$ is generated in degree $1$ 
and $\mathcal{R}_1^1(\mathcal{M}) \subseteq \{0\}$.

Obviously $H^1 \mathcal{M}= H^1 \mathcal{B}=Z_0$. Take $w=\overline{w}+ \xi \alpha$ such that 
$d(w)=0, \; \overline{w} \in \wedge^2 (Z_0 \oplus Z_1)$ and $\xi \in \wedge^1 (Z_0 \oplus Z_1)$. Then 
$d(w)=d(\overline{w}) + d(\xi) \alpha - \xi d(\alpha)=0$ implies $d(\xi)=0$, hence 
$\xi \in \wedge^1(x_i, y_i,z)$. It follows that 
\begin{equation}
\label{*}
d(\overline{w}) - \xi x_1 \omega_1 - \xi x_2 \omega_2=0.
\end{equation}
Notice that there are monomials in $d(\overline{w})$ containing $\omega_i$ if and only if 
$\overline{w}$ has a monomial $a \omega_1 \omega_2$with  $a \neq 0$; 
grouping the monomials in \eqref{*} which contain $\omega_1$ (respectively $\omega_2$), 
we conclude that $\xi x_1 =0$ and $\xi x_2=0$, which is possible only when $\xi=0$.

So $d(w)=0$ implies $w \in \wedge^2(x_i, y_i, z, \omega_i)$. The same way $D(w)=0$ implies 
$w \in \wedge^2(x_i, y_i, z, \omega_i)$, hence $d(w)=0 \Leftrightarrow D(w)=0$, 
so $w$ is a cocycle in $\mathcal{B}$ if and only if $w$ is a cocycle in $\mathcal{M}$. 
Since $\im d|_{\mathcal{B}^{1}} \cong \im D|_{\mathcal{M}^{1}}$, 
we obtain a vector space isomorphism:
\begin{equation}
\label{B=M}
H^{\leq 2}\mathcal{M} \cong H^{\leq 2}\mathcal{B}
\end{equation}

We still need a graded algebra isomorphism. Using the previous notations, consider 
the graded algebra $\mathcal{C}^*:=\frac{\wedge^*Z_0}{\wedge^*Z_0\cdot dZ_1}$. Notice that 
we have $H^{\le 2}\mathcal{B} \cong \mathcal{C}^{\le 2}$, as algebras, 
since $H_+^{\leq 2}(\mathcal{B})=0$. Define a graded algebra morphism
\begin{equation}
\label{GAlg}
\psi : \mathcal{C}^* \longrightarrow H^*(\mathcal{M})
\end{equation}
given by $\psi(z)=[z], \; \forall z \in Z_0$. It is clear that $\psi^1$ is
a linear isomorphism, and $\psi^2$ is a surjection between vector spaces of the same dimension, 
according to the above computations (see \eqref{B=M}).

However, we will see that the 1-minimal model $\mathcal{M}$ is not $1$-formal, if $p=y_1 y_2$. 
Assuming the contrary, we have an isomorphism of D.G. algebras 
$\phi: \mathcal{B} \longrightarrow \mathcal{M}$ 
(since $\mathcal{B}$ is the $1$-minimal model of the cohomology algebra of $\mathcal{M}$).

Moreover, we can choose $\phi$ such that $\phi|_{Z_0}=\id$. Indeed, 
let $\tilde{h}: \mathcal{B} \overset{\sim}\longrightarrow \mathcal{B}$ be 
the 1-minimal model of the graded algebra automorphism induced by 
$\phi, \; h:H^{\leq 2}(\B)\overset{\sim}\longrightarrow H^{\leq 2}(\mathcal{M}) \equiv H^{\leq 2}(\B)$. 
Replacing an arbitrary $\phi$ by $\phi \circ \tilde{h}^{-1}$, we obtain the desired property.
 
Checking the equality $D \phi =\phi d$ on the generators $\omega_i$, one gets 
$\phi(\omega_i) - \omega_i \in \wedge^1(x_i, y_i, z)$. Clearly, 
$\phi(d(\alpha))$ is a sum of monomials, each containing either $x_1$ or $x_2$.
Let $\phi(\alpha)=a_1x_1+a_2x_2+b_1y_1+b_2y_2+cz+d_1 \omega_1+d_2 \omega_2+e \alpha$, 
with $a_i, b_i, c, d_i, e \in \k$. Notice that $e \neq 0$, otherwise 
$\phi :\mathcal{B}^1 \rightarrow \mathcal{M}^1$ would not be a linear isomorphism. 
Then $D(\phi(\alpha))$ necessarily contains the monomial $y_1y_2$, with nontrivial coefficient $e$. 
But this contradicts the fact that $\phi$ is a D.G.A. morphism.
\end{example}

\begin{remark}
\label{rem=noteq}
Let $G$ be a finitely presentable group, with 1-minimal model $\mathcal{M}$. In Lemma 3.17 
on page 35, implication $(i) \Rightarrow (ii)$, the authors of \cite{ABC} note that
$H^2(\mathcal{M})= (H^1(\mathcal{M}))^2$, if $G$ is 1-formal. This can be recovered from our
Proposition \ref{art}\eqref{i}-\eqref{ii}, case $k=1$.

On the other hand, this implication cannot be reversed, contrary to the claim from 
\cite[Lemma 3.17]{ABC}. Indeed, the 1-minimal model $\mathcal{M}$ constructed in 
Example \ref{contr} above can be realized as $\mathcal{M}= \mathcal{M}(G) =\mathcal{M}_1(G)$,
where $G$ is a finitely presentable (3-step) nilpotent group, by the general theory from
\cite{S}. It follows from Example \ref{contr} that 
$H^2(\mathcal{M})= (H^1(\mathcal{M}))^2$, yet $G$ is not 1-formal. 
\end{remark}

\section{Heisenberg-type groups}
\label{section:heisenberg}

\begin{definition}
\label{heisenberg}
The integral Heisenberg group $\mathcal{H}_n$ is given by the central extension 
\begin{equation}
\label{sir}
0 \longrightarrow \Z \longrightarrow \mathcal{H}_n \longrightarrow \Z^{2n} \longrightarrow 0,
\end{equation}
corresponding to the cohomology class 
$\omega \in H^2(\mathbb{Z}^{2n}, \mathbb{Z})=\wedge^2_{\mathbb{Z}}(x_1, y_1,\dots, x_n, y_n)$, 
where $\omega=x_1 \wedge y_1+ \dots +x_n \wedge y_n$.
\end{definition}

It is immediate that the minimal model of $\mathcal{H}_n$ is the minimal DG algebra  
generated in degree 1,  $\mathcal{M}=\wedge(x_{1}, y_{1}, \dots ,x_{n}, y_{n}, z)$, 
with differential $d(x_{i})=d(y_{i})=0, \forall i$ and $d(z)=x_{1}\wedge y_{1} +\dots +x_{n}\wedge y_{n}$.
Note that the multiplication by 
$\omega$ in the exterior algebra $E^*=\wedge^*(x_{1}, y_{1}, \dots ,x_{n}, y_{n})$, 
$E^i \overset {\mu_{\omega}}  \longrightarrow E^{i+2}$, is injective for $i\leq n-1$, 
by the hard Lefschetz theorem, see \cite{We}.

\begin{lemma}
\label{1-formality}
The cohomology of the Heisenberg group $\mathcal{H}_{n}$ is given by 
\begin{equation}
\label{H^q}
H^{q}(\mathcal{H}_{n}) \cong \frac{\wedge^{q}(x_i, y_i)}{\omega \wedge^{q-2}(x_i, y_i)} 
\oplus \{\eta z \; |  \; \eta \omega =0, \; \eta \in \wedge^{q-1}(x_i,y_i) \} , \; \forall q.
\end{equation} 
The second summand is trivial, for $q \leq n$, and non-trivial, for $q=n+1$.
\end{lemma}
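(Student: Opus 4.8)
The plan is to read the cohomology directly off the explicit minimal model $\mathcal{M}=\wedge(x_1,y_1,\dots,x_n,y_n,z)$, which computes $H^*(\mathcal{H}_n)$. As a graded vector space $\mathcal{M}=E^*\oplus E^*z$, where $E^*=\wedge^*(x_1,y_1,\dots,x_n,y_n)$ carries the zero differential; so every homogeneous element of degree $q$ is written uniquely as $a+bz$ with $a\in E^q$ and $b\in E^{q-1}$. First I would compute $d(a+bz)=(-1)^{q-1}b\omega\in E^{q+1}$, using $d|_{E^*}=0$ and $d(z)=\omega$. Hence $a+bz$ is a cocycle exactly when $b\omega=0$, while the $q$-coboundaries are $d(\mathcal{M}^{q-1})=\omega E^{q-2}\subseteq E^q$.

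Since the coboundaries have no $z$-component, the cohomology splits as a direct sum: the cocycle space equals $E^q\oplus\{\,bz\mid b\in E^{q-1},\ b\omega=0\,\}$, and $\omega E^{q-2}$ meets only the first factor. This yields
\[
H^q(\mathcal{M})\cong\frac{E^q}{\omega E^{q-2}}\oplus\{\,\eta z\mid \eta\in E^{q-1},\ \eta\omega=0\,\},
\]
which is precisely \eqref{H^q}. This part is bookkeeping; the only point requiring a moment's care is the sign of $d(bz)$, which is irrelevant for kernels and images.

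For the final two sentences I would identify the second summand with $\ker(\mu_\omega\colon E^{q-1}\to E^{q+1})$. The injectivity of $\mu_\omega$ on $E^i$ for $i\le n-1$, furnished by the hard Lefschetz theorem recalled above, forces this kernel to vanish whenever $q-1\le n-1$, i.e.\ for $q\le n$; this gives triviality of the second summand in that range. For $q=n+1$ the summand is $\ker(\mu_\omega\colon E^n\to E^{n+2})$, and I would deduce nontriviality by a rank count: $\dim E^n=\binom{2n}{n}$ strictly exceeds $\dim E^{n+2}=\binom{2n}{n+2}$ for every $n\ge 1$, so by rank--nullity $\dim\ker(\mu_\omega\colon E^n\to E^{n+2})\ge\binom{2n}{n}-\binom{2n}{n+2}>0$.

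The main obstacle is exactly this nontriviality at $q=n+1$: one must verify that $\mu_\omega$ genuinely drops rank in the middle degree. The dimension inequality $\binom{2n}{n}>\binom{2n}{n+2}$ is the cleanest way to see it and needs no explicit cocycle; alternatively one can exhibit a primitive class directly (for instance $x_1y_1-x_2y_2$ when $n=2$), but producing such an element for general $n$ is more cumbersome, so I would favour the counting argument.
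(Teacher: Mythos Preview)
Your proof is correct and follows essentially the same route as the paper: the same $E^*\oplus E^*z$ splitting, the same identification of cocycles and coboundaries, and the same appeal to hard Lefschetz for the vanishing of the second summand when $q\le n$.

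The only difference is in the nontriviality at $q=n+1$. You use a rank--nullity count with $\binom{2n}{n}>\binom{2n}{n+2}$, which is perfectly fine. The paper instead exhibits the explicit element $\eta=y_1\cdots y_n\in E^n$, which satisfies $\eta\omega=\sum_i y_1\cdots y_n\,x_iy_i=0$ since each summand contains $y_i^2$. So, contrary to your remark that producing such an element for general $n$ is ``more cumbersome'', there is a uniform one-line witness; your example $x_1y_1-x_2y_2$ for $n=2$ is a primitive class of the wrong degree for this purpose (degree $2$, not $n$), though it does lie in $\ker\mu_\omega$ on $E^2$ when $n=2$.
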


\begin{proof}
It is clear  that 
\begin{equation}
\label{H^1}
H^{1}(\mathcal{H}_{n})=\wedge^1(x_i, y_i) 
\end{equation}
Let us compute $H^q(\mathcal{H}_n)$, for $2 \leq q$.
Any $q$-form $\xi\in \wedge^{q}(x_i, y_i, z)$ is of the type $\xi = \eta_1 +\eta_2z$, where 
$\eta_1 \in \wedge^{q}(x_i,y_i)$ and $\eta_2 \in \wedge^{q-1}(x_i,y_i)$. Hence 
$\xi$ is a cocycle if and only if $ \pm d(\xi)=\eta_2 \omega =0$. In case $q \leq n$ 
the last equality implies $\eta_2=0$. Moreover we get that any $q$-coboundary is of the type 
$\eta_2 \omega$,  $\eta_2 \in \wedge^{q-2}(x_i,y_i)$. 
Consequently, $H^q(\mathcal{H}_n)$ has the asserted form. Clearly, $\eta=y_1 \cdots y_n$ 
creates a nontrivial contribution of the second summand, in degree $n+1$.
\end{proof}

\begin{remark}
\label{FernandezMunoz}

The above Lemma shows that $\mathcal{H}_n$ is $(n-1)$-formal 
(use Theorem \ref{B}, Part \eqref{converse}), 
but not $n$-formal (as follows from Theorem \ref{B}, Part \eqref{motz}).
At the same time, it shows that the notion of partial formality from \cite{FM} is strictly stronger 
than the one used here.

Consider the 1-formal (in the sense of our Definition \ref{s-formality}) group $\mathcal{H}_2$. 
The 1-formality of $\mathcal{H}_2$ in the sense of  \cite[Definition 2.2]{FM} would imply
a decomposition of the space of degree 1 generators 
$<x_1, x_2, y_1, y_2, z>$ as a direct sum $C^1 \oplus N^1$, satisfying the conditions:\\
\textit{(i)} $d(C^1)=0$;\\
\textit{(ii)} the restriction of the differential $d$ to $N^1$ is injective;\\
\textit{(iii)} any closed element in the ideal generated by $N^1$ in 
$\mathcal{M}= \mathcal{M}_1(\mathcal{H}_2)= \mathcal{M} (\mathcal{H}_2)$ 
is exact in $\mathcal{M}(\mathcal{H}_2)$.

In this case $C^1$ is the subspace $<x_1, x_2, y_1, y_2>$ and $N^1$ 
must be generated by $z+\alpha$ with $\alpha \in \wedge^1(C^1)$. Plainly, 
$(z+ \alpha)x_1x_2y_1y_2$ is closed, but not exact.
\end{remark}
 
  Let us compute some resonance varieties for Heisenberg groups:

\begin{prop}
\label{H-char}
$\mathcal{R}^*_{1}(\mathcal{H}_n)=\{0\}$, for $* \leq n-1$ and 
$\mathcal{R}^n_{1}(\mathcal{H}_n)=\k^{2n}$. 
\end{prop}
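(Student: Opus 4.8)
The plan is to analyze, for each $w\in H^1(\mathcal{H}_n,\k)$, the Aomoto (Koszul) complex $(H^\ast(\mathcal{H}_n),\mu_w)$, using the explicit cohomology from Lemma \ref{1-formality}. Write $E^\ast=\wedge^\ast(x_1,y_1,\dots,x_n,y_n)$, so that $\omega=\sum_i x_iy_i$. The decisive structural observation is that the whole Aomoto complex splits as a direct sum of two subcomplexes
$$(H^\ast(\mathcal{H}_n),\mu_w)=(A^\bullet,\mu_w)\oplus(B^\bullet,\mu_w),\qquad A^m=E^m/\omega E^{m-2},\quad B^m=\{\eta z:\eta\in E^{m-1},\ \eta\omega=0\}.$$
Indeed, for $w\in H^1=E^1$ one has $w\cdot[\eta_1]=[w\eta_1]\in A^{m+1}$ and $w\cdot(\eta z)=(w\eta)z\in B^{m+1}$ (note $(w\eta)\omega=w(\eta\omega)=0$), so $\mu_w$ preserves both summands. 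Hence $H^q(H^\ast,\mu_w)=H^q(A^\bullet,\mu_w)\oplus H^q(B^\bullet,\mu_w)$, and the two halves can be handled separately.

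For $q\le n-1$ I would show the resonance is trivial. By Lemma \ref{1-formality} the second summand vanishes through degree $n$, so $B^{q-1}=B^q=B^{q+1}=0$ and $H^q(H^\ast,\mu_w)=H^q(A^\bullet,\mu_w)$. For $w\ne0$ the complex $(E^\bullet,\mu_w)$ is the Koszul complex of a nonzero linear form on an exterior algebra, hence acyclic. Multiplication by $\omega$ makes $\omega E^\bullet\subseteq E^\bullet$ a $\mu_w$-subcomplex, giving a short exact sequence of $\mu_w$-complexes $0\to\omega E^\bullet\to E^\bullet\to A^\bullet\to 0$ and therefore $H^q(A^\bullet,\mu_w)\cong H^{q+1}(\omega E^\bullet,\mu_w)$. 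In the relevant range the hard Lefschetz injectivity of $\mu_\omega\colon E^{m}\to E^{m+2}$ (valid for $m\le n-1$) identifies $(\omega E^\bullet,\mu_w)$ with $(E^\bullet,\mu_w)$ shifted by two; since $q\le n-1$ this forces $H^{q+1}(\omega E^\bullet,\mu_w)\cong H^{q-1}(E^\bullet,\mu_w)=0$. Thus $H^q(H^\ast,\mu_w)=0$ for all $w\ne0$ and $q\le n-1$, i.e. $\mathcal{R}^q_1(\mathcal{H}_n)\subseteq\{0\}$, the reverse inclusion being automatic.

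For $q=n$ the claim is that $H^n(H^\ast,\mu_w)\ne0$ for \emph{every} $w$ (for $w=0$ this is clear). Again $B$ contributes nothing in degree $n$, while hard Lefschetz gives $\omega E^{n-1}=E^{n+1}$, i.e. $A^{n+1}=0$; hence $H^n(H^\ast,\mu_w)=\coker(\mu_w\colon A^{n-1}\to A^n)$. A short duality computation (either the Poincaré pairing on $E^\bullet$, or Poincaré duality of the closed orientable nilmanifold $\mathcal{H}_n$, which gives $\dim H^n=\dim H^{n+1}$ and lets one instead read $H^{n+1}$ off the clean summand $B^{n+1}=P^n z$, where $P^n=\Ker(\mu_\omega|_{E^n})$) reduces the nonvanishing to the single statement
$$P^n\cap wE^{n-1}\neq 0.$$

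The main obstacle is precisely this last nonvanishing, and it must hold for all $w\ne0$ simultaneously; a naive dimension count of $P^n\cap wE^{n-1}$ inside $E^n$ is positive only for small $n$ and fails once $n$ is large, so a more structural argument is needed. My resolution is to exploit symmetry: the symplectic group $\mathrm{Sp}(2n,\k)$ preserves $\omega$, hence acts by algebra automorphisms on $H^\ast(\mathcal{H}_n)$ and carries the Aomoto complex for $w$ isomorphically onto that for $gw$; since $\mathrm{Sp}(2n,\k)$ acts transitively on nonzero vectors, it suffices to treat $w=x_1$. Writing $V'=\langle x_2,y_2,\dots,x_n,y_n\rangle$ with its symplectic form $\omega'$ and $E'=\wedge^\ast V'$, I would take any nonzero primitive class $\xi$ in the middle degree $n-1$ of $E'$, i.e. $\xi\in (E')^{n-1}$ with $\omega'\xi=0$ (such $\xi$ exists because the middle primitive part of a $2(n-1)$-dimensional symplectic space is nonzero). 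Then $\eta=x_1\xi\ne0$ lies in $x_1E^{n-1}$, and $\omega\eta=(x_1y_1+\omega')x_1\xi=x_1\omega'\xi=0$, so $\eta\in P^n\cap x_1E^{n-1}$ is the desired nonzero element. This gives $H^n(H^\ast,\mu_{x_1})\ne0$, hence $x_1\in\mathcal{R}^n_1(\mathcal{H}_n)$, and by transitivity $\mathcal{R}^n_1(\mathcal{H}_n)=\k^{2n}$.
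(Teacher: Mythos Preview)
Your argument is correct, but it takes a different route from the paper's in both halves.

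For $q\le n-1$, the paper does not compute anything directly: it invokes the general obstruction Theorem~\ref{zerores} (nilpotent $s$-formal $\Rightarrow$ $\RR_1^{\le s}\subseteq\{0\}$), together with the $(n-1)$-formality of $\mathcal{H}_n$ established via Theorem~\ref{B}\eqref{converse}. Your direct Koszul/hard Lefschetz computation with the short exact sequence $0\to \omega E^\bullet\to E^\bullet\to A^\bullet\to 0$ is a valid, self-contained alternative that avoids the bigraded-model machinery behind Theorem~\ref{zerores}. One small point: the ``reverse inclusion being automatic'' is the assertion $H^q(\mathcal{H}_n)\neq 0$ for $q\le n-1$; this is easy from Lemma~\ref{1-formality} and the inequality $\binom{2n}{q}>\binom{2n}{q-2}$, but it is not literally automatic and deserves a word.

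For $q=n$, both proofs reduce to $w=x_1$ by symplectic change of coordinates. The paper then simply exhibits $[y_1\cdots y_n]\in H^n$ as a class annihilated by $\mu_{x_1}$ (since $x_1y_1\cdots y_n=\omega\, y_2\cdots y_n$) that is visibly not in $x_1E^{n-1}+\omega E^{n-2}$ (no monomial on the right avoids all $x_i$'s). Your route via Poincar\'e duality to degree $n+1$, landing in the clean summand $B^{n+1}=P^n z$ and reducing to $P^n\cap x_1E^{n-1}\neq 0$, is correct and pleasantly structural (your witness $x_1\xi$ with $\xi\in (E')^{n-1}$ primitive for $\omega'$, e.g.\ $\xi=y_2\cdots y_n$, is essentially the Poincar\'e dual of the paper's $y_1\cdots y_n$), but it is longer than the paper's two-line verification.
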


\begin{proof}
We know the additive structure of $H^*(\mathcal{H}_n)$ (see Lemma \ref{1-formality}). 
We have to compute the cohomology of the complex $H^*(\mathcal{H}_n)$ with differential 
given by the multiplication with the class of an element $0 \neq \xi \in \wedge^1(x_i,y_i)$.

We may assume $\xi=x_1$, by a linear change of coordinates. The $n$-class $[y_1\dots y_n]$ 
cannot be obtained by multiplying some $(n-1)$-class by $[x_1]$: 
the equality $y_1\dots y_n= x_1 \eta + \omega \beta$, with $\eta  \in \wedge ^{n-1}(x_i, y_i)$ 
and $\beta \in \wedge ^{n-2}(x_i, y_i)$ is impossible, since at the right hand side 
all monomials contain some $x_i$ component. 
 
 However $ x_1 y_1 \dots y_n=\omega  y_2 \dots y_n$, hence $[x_1y_1 \dots y_n]=0$ 
 in $H^{n+1}(\mathcal{H}_n)$; so $[x_1] \in \mathcal{R}^n_{1}(\mathcal{H}_n)$. This proves 
 that $\mathcal{R}^n_{1}(\mathcal{H}_n)=\wedge^1 (x_i, y_i)$.

We can apply Theorem \ref{zerores} 
and deduce $\mathcal{R}^{*}_{1}(\mathcal{H}_n) \subseteq \{0\}$, for $*<n$.

It remains to see that $0 \in \mathcal{R}^{*}_{1}(\mathcal{H}_n) $, if $*<n$. 
Assuming the contrary, it follows from Lemma \ref{1-formality} that $H^n(\mathcal{H}_n)=0$. 
But this implies $\mathcal{R}^{n}_{1}(\mathcal{H}_n)= \emptyset$, 
contradicting the first computation.
\end{proof}

However, the triviality of the resonance varieties and the property of having 
the cohomology generated in degree 1 are independent in general, as we can see below:

\begin{example}
\label{initial}
Let $G$ be a finitely generated 2-step nilpotent group with minimal model generated in degree 1, 
$\mathcal{N}=\wedge(x_1,x_2, y_1, y_2, z)\otimes \wedge (\omega_1, \omega_2)$ with differentials 
$dz=dx_i=dy_i=0,\; i=1,2$ and $d\omega_1=x_1y_1+x_2z, \;d\omega_2=x_2y_2+x_1z$; 
see Example \ref{contr}. Then 
$\mathcal{R}^1_1(G)=\{0\}$, but $H^2(G) \neq (H^1(G))^2$.

Set $K:=K(G,1)$.  
First we compute the cohomology of $K$ in low degrees. 
It is immediate that $H^1(K)=\wedge^1(x_i, y_i, z)$ and 
$H^2(K)= H_0^2(K)\oplus H_1^2(K)\oplus H_2^2(K)$, where 
$H_0^2(K) = \frac{\wedge^2(x_i, y_i, z)}{<d\omega_1, d\omega_2>}$. A direct computation
shows that $H_1^2(K) = <x_1 \omega_1+x_2\omega_2>$, and $H_2^2(K) =0$.

Hence 
$H^2(K)=\frac{\wedge^2(x_i, y_i, z)}{<d\omega_1, d\omega_2>}\oplus <x_1 \omega_1+x_2\omega_2> \neq (H^1(K))^2$.

As for the resonance variety $\mathcal{R}^1_1(K)$, take a one-cycle $\xi \neq 0$  and  
a one-cycle $\eta$ such that $[\eta\xi]=0$  in cohomology. This can happen only if 
$\eta\xi=a d\omega_1+b d\omega_2$ for some $a,b \in \k$.

If $\eta\xi=a d\omega_1+b d\omega_2$, then 
$0=(\eta \xi)^2= a^2(d\omega_1)^2+ 2ab d \omega_1 d \omega_2+b^2(d\omega_2)^2$, i.e. 
$0=2a^2x_1y_1x_2z+ 2ab x_1y_1x_2y_2+2b^2x_2y_2x_1z$, so $a=b=0$. Consequently  
$\eta \xi=0$ in $\wedge(x_1,x_2, y_1, y_2, z)$. Since $\xi \neq 0,\;\eta \in <\xi>$.

Therefore, $\mathcal{R}^1_1(K)=\{0\}$.
\end{example}

 Let $G$ be the 2-step nilpotent group defined by the central extension:
\begin{equation}
\label{sir'}
0 \longrightarrow B\longrightarrow G \longrightarrow A \longrightarrow 0,
\end{equation}
where $B$ is an abelian group of rank 1, and 
$A$ is an abelian group of finite rank $n$.

The minimal model of $G$ from \eqref{sir'} is of the form 
$\mathcal{M}(G)=\wedge(t_1, \dots, t_n) \otimes \wedge(z)$, with differentials 
$d(t_i)=0, \; \forall i$ and $d(z):= \omega \in \wedge^2(t_1, \dots, t_n)$.

\begin{definition}
\label{nilp}
$G$ is called of Heisenberg-type if $\omega \neq 0$.
\end{definition}

Moreover we may assume $\omega$ has the canonical form  $\omega= x_1y_1+ \dots +x_my_m$, 
where $2m=\rk(\omega)$; consequently:
\begin{equation}
\label{htype}
\mathcal{M}(G)=\mathcal{M}({\mathcal{H}_{m}}) \otimes (\wedge(t_{2m+1}, \dots, t_n),d=0).
\end{equation}

To obtain information on the resonance varieties associated to Heisenberg-type groups, 
we will use the following result:

\begin{prop}
\label{ref}
Let $A^*,\; B^*$ be graded-commutative connected algebras. Then
\begin{equation}
\label{R^q}
\mathcal{R}_1^q(A^*\otimes B^*)=\bigcup _{m+n=q}\mathcal{R}^m_1(A^*) \times \mathcal{R}^n_1(B^*)
\end{equation}  
\end{prop}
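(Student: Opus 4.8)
The plan is to compute both sides of \eqref{R^q} directly from Definition \ref{var-res}, exploiting the K\"unneth decomposition of the cohomology of a tensor product of DGAs. The key observation is that for a class $w \in H^1(A^* \otimes B^*, \k)$, we may write $w = (a, b)$ with $a \in H^1(A^*)$ and $b \in H^1(B^*)$, since $H^1(A^* \otimes B^*) = H^1(A^*) \oplus H^1(B^*)$ for connected algebras. The multiplication map $\mu_w$ on the total cohomology then decomposes, via K\"unneth, in a way compatible with the two factors.

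First I would make precise the K\"unneth isomorphism $H^q(A^* \otimes B^*) \cong \bigoplus_{m+n=q} H^m(A^*) \otimes H^n(B^*)$, and identify the action of $\mu_w$ under this splitting. Since $w = a + b$ and the algebra structure is the graded tensor product, multiplication by $w$ on a class $\alpha \otimes \beta$ produces $(\mu_a \alpha) \otimes \beta \pm \alpha \otimes (\mu_b \beta)$. This means the cochain complex $(H^*(A^* \otimes B^*), \mu_w)$ is precisely the tensor product of the two complexes $(H^*(A^*), \mu_a)$ and $(H^*(B^*), \mu_b)$. The crucial step is then a second application of the K\"unneth formula, this time at the level of these Aomoto-type complexes: since we work over a field $\k$, we get $H^q(H^*(A^*\otimes B^*), \mu_w) \cong \bigoplus_{m+n=q} H^m(H^*(A^*),\mu_a) \otimes H^n(H^*(B^*),\mu_b)$, with no Tor terms.

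With this identification in hand, the dimension count follows: $\dim_\k H^q(H^*(A^*\otimes B^*),\mu_w) \geq 1$ if and only if there exist $m+n=q$ with both $\dim_\k H^m(H^*(A^*),\mu_a) \geq 1$ and $\dim_\k H^n(H^*(B^*),\mu_b) \geq 1$. By Definition \ref{var-res} with $k=1$, the left condition says $a \in \mathcal{R}_1^m(A^*)$ and the right says $b \in \mathcal{R}_1^n(B^*)$. Translating the membership $w = (a,b) \in \mathcal{R}_1^q(A^* \otimes B^*)$ into the existence of such a pair $(m,n)$ yields exactly the asserted union over $m+n=q$ of the products $\mathcal{R}_1^m(A^*) \times \mathcal{R}_1^n(B^*)$.

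The main obstacle I anticipate is the careful bookkeeping in the two successive K\"unneth arguments, in particular verifying that $\mu_w$ really corresponds to the tensor-product differential on the Aomoto complexes rather than something subtler. One must check that the signs arising from the graded-commutative structure do not disrupt the complex structure (they only affect the differential up to the usual Koszul sign, which is harmless for computing cohomology dimensions), and that the two K\"unneth isomorphisms are compatible, so that the bigrading by $(m,n)$ is respected throughout. Once the identification of $(H^*(A^*\otimes B^*),\mu_w)$ as a tensor product of complexes is established cleanly, the remainder is a routine dimension argument using that $\k$ is a field.
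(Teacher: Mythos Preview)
Your proposal is correct and follows essentially the same route as the paper: decompose $\xi=\xi_A+\xi_B$, observe that $\mu_\xi$ turns $A^*\otimes B^*$ into the tensor product of the Aomoto complexes $(A^*,\mu_{\xi_A})$ and $(B^*,\mu_{\xi_B})$, apply K\"unneth over the field $\k$, and read off the condition $H^q\neq 0$ as the existence of a pair $(m,n)$ with $m+n=q$ and both factors nonzero. The only cosmetic difference is that you speak of ``two successive K\"unneth arguments'': since $A^*$ and $B^*$ are already graded algebras (not DGAs), the first K\"unneth is just the tautological decomposition $(A\otimes B)^q=\bigoplus_{m+n=q}A^m\otimes B^n$, so only the second one carries content, exactly as in the paper's proof.
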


\begin{proof}
Set $C^*=A^* \otimes B^*$. If $\xi=\xi_A+\xi_B \in C^1=A^1 \oplus B^1$ is 
an arbitrary degree 1 element, then the multiplication by $\xi$ on $C^*$ is given by:
\begin{equation}
\label{kun}
\mu_{\xi}(a\otimes b)=\mu_{\xi_{A}}(a)\otimes b+(-1)^{|a|}a\otimes \mu_{\xi_B}( b), 
\; a \in A^*, \;b \in B^* \, .
\end{equation}
Consequently, by K\"{u}nneth, 
\begin{equation}
\label{H}
H^q(C^*, \mu_\xi)= \oplus_{m+n=q} H^m(A^*, \mu_{\xi_A}) \otimes H^n(B^*, \mu_{\xi_B})\, .
\end{equation}

By definition, $\mathcal{R}^q_1(A^* \otimes B^*)= \{ \xi \in A^1 \oplus B^1 \; |\;
H^q(A^* \otimes B^*, \mu_{\xi}) \neq 0\}= \\
\{ (\xi_A, \xi_B) \in A^1 \times B^1| \oplus_{m+n=q} H^m(A^*, \mu_{\xi_A}) \otimes H^n(B^*, \mu_{\xi_B}) \neq 0 \}=\\
\{ (\xi_A, \xi_B) \in A^1 \times B^1| \;\exists (m,n)\, , m+n=q, \;H^m(A^*, \mu_{\xi_A})\neq 0 \; \& 
\; H^n(B^*, \mu_{\xi_B}) \neq 0\}= \\
\{(\xi_A, \xi_B) \in A^1 \times B^1|\; \exists (m,n)\, , m+n=q, \;\xi_A \in \mathcal{R}^m_1(A^*) \; 
\& \; \xi_B \in \mathcal{R}^n_1(B^*)  \}= \\
\bigcup _{m+n=q}\mathcal{R}^m_1(A^*) \times \mathcal{R}^n_1(B^*)$.
\end{proof}

\begin{corollary}
\label{gr}
Let $G$ be a Heisenberg-type group, with minimal model described in \eqref{htype}. 
Then $\mathcal{R}^*_{1}(G)=\{0\}$, for $* \leq m-1$ and $\mathcal{R}^m_{1}(G)=\k^{2m}$.
\end{corollary}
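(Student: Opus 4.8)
The plan is to read off the answer from the product decomposition \eqref{htype}, feeding it into Proposition \ref{ref} together with the resonance of the Heisenberg factor already recorded in Proposition \ref{H-char}. So the only genuinely new input needed is the resonance of the trivial (exterior) factor, after which everything is a matter of combining two lists along the Künneth union.

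First I would pass from the minimal model to cohomology. Applying the K\"unneth formula to \eqref{htype}, and using that the factor $\wedge(t_{2m+1}, \dots, t_n)$ carries the zero differential, yields an isomorphism of graded algebras $H^*(G) \cong A^* \otimes B^*$, where $A^*=H^*(\mathcal{H}_m)$ and $B^*=\wedge(t_{2m+1}, \dots, t_n)$. This is exactly the situation addressed by Proposition \ref{ref}, with $H^1(G)=A^1\oplus B^1=\wedge^1(x_i,y_i)\oplus \langle t_{2m+1},\dots,t_n\rangle$.

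Next I would compute $\mathcal{R}^j_1(B^*)$. Since $B^*=\wedge V$ is an exterior algebra on the degree-$1$ space $V=\langle t_{2m+1},\dots,t_n\rangle$, for $0\neq \xi\in V$ the complex $(\wedge V,\mu_\xi)$ is the Koszul complex of a nonzero linear form: extending $\xi$ to a basis exhibits it as the tensor product of the acyclic two-term complex $\k \overset{\xi}{\longrightarrow} \k\xi$ with an exterior algebra, so $H^*(B^*,\mu_\xi)=0$. For $\xi=0$ one has $H^j(B^*,\mu_0)=\wedge^j V$, which is nonzero precisely for $0\le j\le n-2m$. Hence $\mathcal{R}^j_1(B^*)=\{0\}$ for $0\le j\le n-2m$ and $\mathcal{R}^j_1(B^*)=\emptyset$ otherwise.

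Finally I would assemble the result from Proposition \ref{ref},
\[
\mathcal{R}^q_1(G)=\bigcup_{i+j=q}\mathcal{R}^i_1(\mathcal{H}_m)\times \mathcal{R}^j_1(B^*),
\]
using Proposition \ref{H-char} (namely $\mathcal{R}^i_1(\mathcal{H}_m)=\{0\}$ for $i\le m-1$ and $\mathcal{R}^m_1(\mathcal{H}_m)=\k^{2m}$). For $q\le m-1$ every index $i$ occurring satisfies $i\le m-1$, so $\mathcal{R}^i_1(\mathcal{H}_m)=\{0\}$, while each $\mathcal{R}^j_1(B^*)$ is $\{0\}$ or empty; the term $(i,j)=(q,0)$ contributes $\{0\}\times\{0\}$, so the union collapses to $\{0\}$. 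For $q=m$ the unique term involving $\mathcal{R}^m_1(\mathcal{H}_m)$ is $(i,j)=(m,0)$, contributing $\k^{2m}\times\{0\}$, whereas every other term lies in $\{0\}\times(\{0\}\cup\emptyset)\subseteq \k^{2m}\times\{0\}$; thus $\mathcal{R}^m_1(G)=\k^{2m}$, identified with the coordinate subspace $\wedge^1(x_i,y_i)\subseteq H^1(G)$. The only mildly delicate point is the bookkeeping of this union, i.e. keeping track of which degree-$j$ pieces of $B^*$ are empty rather than $\{0\}$; the acyclicity of the Koszul factor makes that routine, so I expect no real obstacle.
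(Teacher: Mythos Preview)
Your proof is correct and follows essentially the same route as the paper: apply the product formula of Proposition~\ref{ref} to the tensor decomposition coming from \eqref{htype}, invoke Proposition~\ref{H-char} for the Heisenberg factor, and use that the resonance of an exterior algebra is trivial. The paper states the last point without justification, whereas you supply the Koszul acyclicity argument and carry out the degree-by-degree bookkeeping of the union explicitly; this extra detail is fine but not a genuinely different approach.
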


\begin{proof}
By \eqref{htype} and Proposition \ref{ref}, $\mathcal{R}^k_1(G)$ equals
\begin{center}
$\mathcal{R}^k_1(H^*(\mathcal{H}_{m}) \otimes \wedge^*(t_{2m+1}, \dots, t_n))= 
\bigcup_{p+q=k}\mathcal{R}^p_1(\mathcal{H}_m) \times \mathcal{R}^q_1(\wedge^*(t_{2m+1}, \dots, t_n))$.
\end{center}
Since 
the resonance for the exterior algebra is trivial, the corollary follows from Proposition \ref{H-char}.
\end{proof}

\begin{corollary}
\label{c1}
A Heisenberg-type group $G$  with $\rk (\omega)=2m$ is $(m-1)$-formal, but not $m$-formal.
\end{corollary}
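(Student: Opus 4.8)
The plan is to establish the two assertions separately, using the cohomology-generation criterion of Theorem \ref{B} for the positive ($(m-1)$-formality) statement and the resonance obstruction of Theorem \ref{zerores} for the negative (non-$m$-formality) statement. First I observe that $G$, being a central extension of an abelian group of finite rank by an abelian group of rank one (see \eqref{sir'} and Remark \ref{alt}), is finitely generated and $2$-step nilpotent, so both theorems apply. The entire argument rests on the tensor decomposition \eqref{htype}, $\mathcal{M}(G)=\mathcal{M}(\mathcal{H}_m)\otimes(\wedge(t_{2m+1},\dots,t_n),d=0)$, which by K\"unneth gives $H^*(G)\cong H^*(\mathcal{H}_m)\otimes\wedge^*(t_{2m+1},\dots,t_n)$ as graded algebras.

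For the $(m-1)$-formality, since $G$ is $2$-step nilpotent I would invoke the converse direction, Theorem \ref{B}\eqref{converse}: it suffices to check that $H^{\le m}(G)$ is generated as an algebra by $H^1(G)$. Here Lemma \ref{1-formality} does the work. In degrees $q\le m$ the second (``$\eta z$'') summand of $H^q(\mathcal{H}_m)$ vanishes, so $H^q(\mathcal{H}_m)$ is merely the quotient $\wedge^q(x_i,y_i)/\omega\wedge^{q-2}(x_i,y_i)$, which is manifestly generated by degree-$1$ classes. Any class of degree $\le m$ in $H^*(G)$ is a sum of products $h\otimes e$ with $h\in H^p(\mathcal{H}_m)$, $p\le m$, and $e\in\wedge^q(t_j)$; both factors decompose into degree-$1$ pieces, so the same holds for the product. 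Hence $H^{\le m}(G)=(H^1(G))^{\le m}$, and Theorem \ref{B}\eqref{converse} yields $(m-1)$-formality.

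For the failure of $m$-formality I would argue by contradiction through resonance rather than through cohomology generation. By Corollary \ref{gr} we have $\mathcal{R}^m_1(G)=\k^{2m}$, and since $G$ is of Heisenberg type we have $\rk(\omega)=2m\ge 2$ (Definition \ref{nilp}), so $m\ge 1$ and $\k^{2m}\not\subseteq\{0\}$. If $G$ were $m$-formal, Theorem \ref{zerores} (applicable as $G$ is nilpotent and finitely generated) would force $\mathcal{R}^i_1(G)\subseteq\{0\}$ for all $i\le m$, in particular $\mathcal{R}^m_1(G)\subseteq\{0\}$, contradicting the computation above.

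The two halves are genuinely asymmetric, and the only place demanding care is the degree-$1$ generation in the middle step: one must ensure that passing to the tensor product with the zero-differential exterior algebra does not introduce indecomposable classes in degrees $\le m$, which is exactly guaranteed by the vanishing of the second summand of $H^{\le m}(\mathcal{H}_m)$ in Lemma \ref{1-formality}. I expect this bookkeeping, rather than the resonance argument (which is immediate from Corollary \ref{gr}), to be the main---though modest---obstacle.
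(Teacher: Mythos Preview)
Your proof is correct and follows essentially the same route as the paper: non-$m$-formality via Theorem \ref{zerores} and Corollary \ref{gr}, and $(m-1)$-formality via Theorem \ref{B}\eqref{converse}, Lemma \ref{1-formality}, and the tensor decomposition \eqref{htype}. Your write-up simply supplies more detail than the paper's terse version.
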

\begin{proof}

One can easily see from Theorem \ref{zerores}, via Corollary \ref{gr}, that $G$ is not $m$-formal. 
The $(m-1)$-formality follows from Theorem \ref{B}, Part \eqref{converse} and Lemma \ref{1-formality}, 
using again \eqref{htype} to describe the cohomology ring of $G$ up to degree $m$.
\end{proof}

\begin{corollary}
\label{T}
A Heisenberg-type group $G$ with $\rk (\omega)=2m$ cannot be realized as the fundamental group 
of a smooth projective complex variety $M$ with $\pi_{ \leq m}(\tilde{M})=0$, 
where $ \tilde{M}$ is the universal covering of $M$.
\end{corollary}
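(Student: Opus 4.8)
The plan is to argue by contradiction, combining the formality of smooth projective complex varieties (\cite{DGMS}) with the non-$m$-formality of $G$ recorded in Corollary \ref{c1} and the homotopy-theoretic obstruction of Theorem \ref{Thm}. Suppose $G$ were the fundamental group of such a variety $M$, with universal cover $\tilde{M}$ satisfying $\pi_{\leq m}(\tilde{M})=0$. Since $M$ is a smooth projective complex variety it is formal, hence $k$-formal for every $k$; in particular $M$ is $m$-formal. On the other hand, because a covering map induces isomorphisms on homotopy groups in degrees $\geq 2$, the hypothesis $\pi_{\leq m}(\tilde{M})=0$ is equivalent to $\pi_i(M)=0$ for all $2 \leq i \leq m$.

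For the main case $m \geq 2$ I would invoke Theorem \ref{Thm} in contrapositive form with $k=m$: a space $M$ that is $m$-formal and has $\pi_i(M)=0$ for $2 \leq i \leq m$ must have $m$-formal fundamental group. This forces $G=\pi_1(M)$ to be $m$-formal, directly contradicting Corollary \ref{c1}, which asserts that a Heisenberg-type group with $\rk(\omega)=2m$ is precisely \emph{not} $m$-formal. This contradiction rules out the existence of such an $M$.

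The remaining case $m=1$ must be treated separately, since Theorem \ref{Thm} requires $k \geq 2$ and the hypothesis $\pi_{\leq 1}(\tilde{M})=0$ carries no information (it holds for every universal cover). Here I would argue directly at the level of $1$-minimal models: the classifying map $f\colon M \longrightarrow K(G,1)$ inducing the identity on $\pi_1$ is a homology $2$-equivalence, so by Corollary \ref{form} the space $M$ is $1$-formal if and only if $G$ is $1$-formal. As $M$ is formal, and therefore $1$-formal, this makes $G$ a $1$-formal group, again contradicting Corollary \ref{c1} in the case $\rk(\omega)=2$.

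I expect no serious computational obstacle, since the substance has been front-loaded into Corollary \ref{c1} and Theorem \ref{Thm}. The only delicate points are bookkeeping ones: translating the vanishing of $\pi_{\leq m}(\tilde{M})$ into the vanishing of the higher homotopy groups $\pi_i(M)$ that appear in Theorem \ref{Thm}, and noticing that the extreme case $m=1$ falls outside the range $k \geq 2$ of that theorem and hence needs the separate $1$-formality argument through Corollary \ref{form}.
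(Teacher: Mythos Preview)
Your proposal is correct and follows essentially the same route as the paper: assume $G=\pi_1(M)$ with $M$ smooth projective, invoke \cite{DGMS} to get $M$ formal (hence $m$-formal), use Corollary \ref{c1} to see $G$ is not $m$-formal, and conclude via Theorem \ref{Thm} that some $\pi_i(M)$ with $2\le i\le m$ is nonzero, contradicting $\pi_{\le m}(\tilde{M})=0$.

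The one genuine difference is that you isolate and handle the case $m=1$ separately. The paper's proof simply applies Theorem \ref{Thm} without commenting on the restriction $k\ge 2$, so strictly speaking it does not cover $m=1$. Your argument for that case---observing that the classifying map $M\to K(G,1)$ is always a $2$-homotopy equivalence, hence a homology $2$-equivalence, and then applying Corollary \ref{form}---is sound and in fact more complete than the paper's own treatment. (One could also note that this very argument underlies the proof of Theorem \ref{thm:1.1strong}, so the $m=1$ case is implicitly available in the paper even if not invoked in the proof of Corollary \ref{T}.)
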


\begin{proof}
Assume $G= \pi_1(M)$, with $M$ a smooth projective complex variety. By the main result 
of \cite{DGMS}, $M$ is a formal space, hence $m$-formal, while $G$ is not $m$-formal 
(see Corollary \ref{c1}). Then there is $2 \leq i \leq m$ such that  
$\pi_i(\tilde{M}) \cong \pi_i(M) \neq 0$; see Theorem \ref{Thm}.
\end{proof}

\begin{ack}
I would like to thank Professor \c{S}. Papadima for his guidance during the elaboration of this work.
\end{ack}

\bibliographystyle{amsplain}

\end{document}